\def\print@backrefs#1{%
    \space\SentenceSpace{\color{black!60}Cited on page:~\csname br@#1\endcsname}%
}
\tikzstyle{every picture}+=[remember picture]
\newtheorem{thm}{Theorem}[section]
\newtheorem{prop}[thm]{Proposition}
\newtheorem{lem}[thm]{Lemma}
\newtheorem{cor}[thm]{Corollary}
\newtheorem{question}[thm]{Question}
\newtheorem{setup}[thm]{Setup}
\theoremstyle{remark}
\newtheorem{rem}[thm]{Remark}
\newtheorem{example}[thm]{Example}
\theoremstyle{definition}
\newtheorem{defi}[thm]{Definition}
\newcommand{\Z}{\mathbb{Z}}
\newcommand{\R}{\mathbb{R}}
\newcommand{\N}{\mathbb{N}}
\DeclareMathOperator{\tors}{tors}
\DeclareMathOperator{\rk}{rk}
\DeclareMathOperator{\mult}{mult}
\DeclareMathOperator{\id}{id}
\DeclareMathOperator{\cd}{cd_{\Z}}
\DeclareMathOperator{\cat}{cat}
\DeclareMathOperator{\Am}{Am}
\DeclareMathOperator{\amcat}{\cat_{\Am}}
\DeclareMathOperator{\cost}{cost}
\DeclareMathOperator{\rg}{rg}
\DeclareMathOperator{\im}{im}
\DeclareMathOperator{\fs}{fs}
\DeclareMathOperator{\chainb}{L^\infty_{\fs}}
\DeclareMathOperator{\Linf}{L^\infty}
\DeclareMathOperator{\ess}{ess}
\def\esst#1#2{%
  #1^{\langle #2\rangle}}
\def\lfsz#1{%
  \chainb (#1,\Z)}
\def\fa#1{%
  \forall_{#1}\;\;\;}
\def\exi#1{%
  \exists_{#1}\;\;\;}
\def\args{\;\cdot\;}
\def\longrightarrow{\rightarrow}
\def\longmapsto{\mapsto}
\def\ucov#1{%
  \widetilde{#1}
}
\def\symmdiff{\mathbin{\triangle}}
\def\linfz#1{\Linf(#1,\Z)}
\def\actson{\curvearrowright}
\def\qand{\quad\text{and}\quad}
\newcommand\norm{\bBigg@{0.8}}
\newcommand{\ifsv}[2][norm]{\csname #1l\endcsname\bracevert\!#2\!%
                            \csname #1r\endcsname\bracevert}
\newcommand{\ifsvp}[3][norm]{\csname #1l\endcsname\bracevert\!#2\!%
                            \csname #1r\endcsname\bracevert\!^{#3}}
\def\stisv#1{\|#1\|_\Z^\infty}
\newcommand{\essn}[2][norm]{\csname #1l\endcsname\vert #2 \csname#1r\endcsname\vert_{1,\ess}}
\def\draftinfo{}
\author[C.~L\"oh]{Clara L\"oh}
\address{Fakult\"at f\"ur Mathematik\\
         Universit\"at Regensburg\\
         93040~Regensburg\\
         }
\email{clara.loeh@mathematik.uni-r.de}
\author[M.~Moraschini]{Marco Moraschini}
\address{Dipartimento di Matematica\\ Universit\`{a} di Bologna\\ 40126~Bologna}
\email{marco.moraschini2@unibo.it}
\author[R.~Sauer]{Roman Sauer}
\address{Karlsruhe Institute of Technology\\
76131~Karlsruhe\\}
\email{roman.sauer@kit.edu}
\title[Amenable covers and integral foliated simplicial volume]{Amenable covers\\ and integral foliated simplicial volume}
\date{\today.\ \copyright{\ C.~L\"oh, M.~Moraschini, R.~Sauer 2021}. 
    This work was supported by the CRC~1085 \emph{Higher Invariants} 
    (Universit\"at Regensburg, funded by the DFG) and by the RTG 2229
    \emph{Asymptotic Invariants and Limits of Groups and Spaces}
    (KIT, funded by the~DFG). 
    \draftinfo\\
     MSC~2010 classification: 55N10, 55N35, 28D15} 
\begin{document}

\begin{abstract}
  In analogy with ordinary simplicial volume, we
  show that integral foliated simplicial volume
  of oriented closed connected aspherical $n$-manifolds
  that admit an open amenable cover of multiplicity
  at most~$n$ is zero. 
  
  This implies that the fundamental groups of such manifolds have
  fixed price and are cheap as well as reproves some statements about
  homology growth.
\end{abstract}

\maketitle

\section{Introduction}

An open problem by Gromov~\citelist{\cite{gromovasym}*{p.~232}\cite{gromov-cycles}*{p.~769~3.1~(e)}}
 asks whether oriented closed
connected manifolds with vanishing simplicial volume have vanishing
Euler characteristic. For the integral foliated simplicial volume such
an implication holds~\cite{mschmidt} (as suggested by
Gromov~\cite{gromov_metric}*{p.~305ff}). However, the corresponding
follow-up question is open:

\begin{question}\label{quest:ifsv:vanishes}
  Let $M$ be an oriented closed connected aspherical manifold 
  with vanishing simplicial volume. Does then also the integral
  foliated simplicial volume of~$M$ vanish?
\end{question}

This question is known to have an affirmative answer in many
cases~\cites{FFL, FLPS, FauserS1, Camp-Corro}. 
In this article, we extend the class of positive examples
by aspherical manifolds that admit amenable open covers of
small multiplicity: If $M$ admits an open amenable cover
of multiplicity at most~$n$, we write~$\amcat M \leq n$.
Usually the amenable category $\amcat$ of $M$ is defined
in terms of the cardinality of amenable covers. 
However, in the case of CW-complexes the two definitions are equivalent~\citelist{\cite{CLM}*{Remark~3.13}\cite{CLOT}*{Lemma~A.4}}.

\begin{thm}\label{thm:main}
  Let $M$ be an oriented closed connected smooth aspherical
  manifold with~$\amcat M \leq \dim M$.
  Let $\alpha \colon
  \pi_1(M) \actson (X,\mu)$ be an essentially free standard
  $\pi_1(M)$-space.
  Then
  \[ \ifsvp M \alpha = 0.
  \]
  In particular, $\ifsv M = 0$.
\end{thm}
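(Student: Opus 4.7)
The plan is to lift Gromov's classical argument (showing $\|M\|=0$ whenever $\amcat M\leq\dim M$) to the parametrized/foliated setting. Asphericity is crucial because it lets the entire argument be translated into (parametrized) bounded cohomology of $\Gamma=\pi_1(M)$, in which $\Linf(X,\Z)$ is naturally a $\Gamma$-module via the action~$\alpha$.

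Write $n=\dim M$. First I would form the nerve $N$ of an amenable open cover $\mathcal U$ realizing $\amcat M\leq n$; by the multiplicity bound $\dim|N|\leq n-1$. A classifying map $\nu\colon M\to|N|$ is available, and over each open simplex $\sigma$ of~$N$ the $\pi_1$-image of the corresponding intersection is contained in an amenable subgroup $A_\sigma\subseteq\Gamma$.

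Second I would pass to parametrized bounded cohomology. Via asphericity and a parametrized Gromov mapping theorem, $H^*_b(M;\Linf(X,\Z))\cong H^*_b(\Gamma;\Linf(X,\Z))$. A Mayer--Vietoris / spectral sequence over the nerve expresses this in terms of the parametrized bounded cohomology groups $H^*_b(A_\sigma;\Linf(X,\Z))$. The key technical input is the parametrized analogue of Johnson's vanishing theorem: for amenable $A$ and essentially free $\alpha$ one has $H^{>0}_b(A;\Linf(X,\Z))=0$. Combined with $\dim|N|<n$, this forces the parametrized fundamental cohomology class of~$M$ to vanish in $H^n_b(M;\Linf(X,\Z))$. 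By the standard duality between this seminorm and the $\ell^1$-seminorm on parametrized fundamental cycles, this yields $\ifsvp M \alpha=0$.

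The main obstacle is the parametrized Johnson-type vanishing with $\Linf(X,\Z)$-coefficients. Classical bounded cohomology vanishing for amenable groups is proved by convex averaging against an invariant mean, a device that is unavailable with integer coefficients. Essential freeness of $\alpha$ is exactly the hypothesis that closes this gap: it permits one to realize $\Linf(X,\Z)$-valued cocycles through measurable partitions of~$X$, on which real-coefficient amenable averaging can be performed and the outcome then recovered with integer values by exploiting the absence of global fixed points. The remaining steps (nerve construction, Mayer--Vietoris assembly, duality with the cycle seminorm) are essentially the parametrized counterparts of the standard arguments for $\|M\|$ and should go through once this vanishing is in hand.
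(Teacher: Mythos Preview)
Your outline has a genuine gap at precisely the two places you flag as ``standard'': neither the Johnson-type vanishing nor the duality principle survives the passage to $\linfz X$-coefficients.

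For the duality step: the equality $\|\alpha\|_1 = \sup\{\langle\varphi,\alpha\rangle : \|\varphi\|_\infty\leq 1\}$ that underlies Gromov's argument is a Hahn--Banach statement and requires the coefficient module to be an $\R$-vector space. The module $\linfz X$ is not one, and there is no substitute: the parametrised $\ell^1$-seminorm on $\linfz X\otimes_{\Z\Gamma}C_*(\widetilde M;\Z)$ is not dual to any bounded cohomology theory. So even if you had $H^n_b(\Gamma;\linfz X)=0$ (whatever that means), you could not conclude $\ifsvp M\alpha=0$ from it. This is not a technicality---if such a duality existed, integral foliated simplicial volume would coincide with ordinary simplicial volume for aspherical manifolds, and Question~\ref{quest:ifsv:vanishes} would be trivial.

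For the vanishing step: your sketch (``average with real coefficients, then recover integer values from essential freeness'') does not work as stated. Averaging a $\Z$-valued cocycle against an invariant mean produces a real-valued function that is essentially never integer-valued, and there is no rounding procedure that preserves the cocycle identity. What actually replaces averaging in the integral setting is the Ornstein--Weiss Rokhlin lemma: for an amenable subgroup $A\leq\Gamma$ acting essentially freely on $(X,\mu)$, one tiles $X$ up to measure~$\delta$ by translates of F{\o}lner sets. This is a chain-level, not a cohomological, tool; it lets you build explicit $\linfz X$-valued chains whose essential norm is $O(\delta)$ on simplices with an ``amenable edge''. The paper's proof runs entirely on the chain level: subdivide a fundamental cycle so that (by pigeonhole on $\mult\mathcal U\leq n$) every simplex has two vertices in the same cover element with amenable edge group, then apply a Rokhlin chain map to drive the essential norm to zero. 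No bounded cohomology and no duality appear.
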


This is an extension of Gromov's vanishing result: 
Every oriented closed connected $n$-manifold~$M$ with
$\amcat M \leq \dim M$ has zero simplicial volume~\cite{vbc}.
The bound on the amenable
category is optimal because every smooth 
manifold~$M$ satisfies $\amcat M \leq \dim M + 1$~\cite[Remark~2.8]{CLM}.

A similar statement was already known for the weightless version of
integral foliated simplicial volume~\cite{sauerminvol} and in the
presence of macroscopic scalar curvature conditions~\cite{braunsauer}.
Our proof of Theorem~\ref{thm:main} also relies on a Rokhlin lemma
argument, but avoids delicate volume estimates in cubical nerves.

In the context of Gromov's question, we arrive at
the following problem:

\begin{question}
  Do there exist oriented closed connected aspherical manifolds~$M$ 
  with~$\|M\| = 0$ but~$\amcat M = \dim M + 1$\;?
\end{question}

We expect the answer to this question to be positive. However,
exhibiting such an example probably requires finding new obstructions
against the existence of open amenable covers of small multiplicity.

Further information on Gromov's problem and its ramifications
can be found in the literature~\cite{loehmoraschiniraptis}.

\subsection{Examples}\label{sec:intro:ex}

Oriented closed connected smooth manifolds~$M$ that satisfy at least
one of the following conditions have~$\amcat M \leq \dim M$:
\begin{itemize}
\item Manifolds with amenable fundamental group;
\item Graph $3$-manifolds~\cite{GGW};
\item Manifolds that are the total space of a fibre bundle $M \to B$
with oriented closed connected fibre $N$ such that
$\amcat N \leq \dim(M)\slash(\dim(B) +1)$~\cite{lmfibre};
\item Manifolds of dimension~$n \geq 4$ whose fundamental
  group~$\Gamma$ contains an amenable normal subgroup~$A$
  with~$\cd (\Gamma/A) < n$ (Lemma~\ref{lemma:cover:cohom:dimension});
\item Manifolds that admit a smooth $S^1$-action without fixed points (Corollary~\ref{cor:S1:action});
\item Manifolds that admit a regular smooth circle foliation with finite holonomy groups (Proposition~\ref{prop:foliation});
\item Manifolds that admit an $F$-structure (of possibly zero rank)~\cite{bsfibre}*{Corollary~2.8}.
\end{itemize}

In view of Theorem~\ref{thm:main}, \emph{aspherical} manifolds of this
type have vanishing integral foliated simplicial volume; more
precisely, we have vanishing for all essentially free parameter
spaces. In most of these cases, the corresponding vanishing results
already had been proved by alternative
means~\cite{FLPS,FLMQ,FauserS1,Camp-Corro}. Theorem~\ref{thm:main}
gives a uniform perspective.

\subsection{Application to manifolds}

Theorem~\ref{thm:main} gives new examples of manifolds
that satisfy integral approximation for simplicial
volume~\citelist{\cite{FFL}\cite{FLMQ}\cite{loehergodic}\cite{lueckl2}*{Section~6}}:

\begin{cor}\label{cor:intapprox}
  Let $M$ be an oriented closed connected smooth aspherical
  manifold with~$\amcat M \leq \dim M$ and residually finite
  fundamental group.
  Then
  \[ \stisv M = 0 = \|M\|.
  \]
  More precisely, in this situation, the corresponding statement also
  holds for all residual chains in the fundamental group.
\end{cor}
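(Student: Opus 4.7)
The plan is to derive the corollary by combining Theorem~\ref{thm:main} with the known approximation theorem that identifies integral foliated simplicial volume with respect to a residual chain parameter space with the limit of normalized integral simplicial volumes along the corresponding tower of finite covers, as proved in~\cite{FFL}.

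First, I would set $\Gamma := \pi_1(M)$ and, using residual finiteness, pick an arbitrary residual chain $(\Gamma_i)_{i \in \N}$ of finite-index normal subgroups of~$\Gamma$ (so $\Gamma_{i+1} \leq \Gamma_i$ and $\bigcap_i \Gamma_i = \{e\}$). From this data I would build the profinite completion parameter space $X := \varprojlim_i \Gamma / \Gamma_i$, endowed with the normalized Haar probability measure~$\mu$ and the natural $\Gamma$-translation action $\alpha \colon \Gamma \actson (X,\mu)$. Then $(X,\mu,\alpha)$ is a standard $\Gamma$-space, and essential freeness of~$\alpha$ follows at once from $\bigcap_i \Gamma_i = \{e\}$.

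Next, Theorem~\ref{thm:main} applies to~$\alpha$ (because $M$ is aspherical with $\amcat M \leq \dim M$), yielding $\ifsvp M \alpha = 0$. I would then invoke the approximation theorem of~\cite{FFL}, which in this setting identifies
\[ \ifsvp M \alpha \;=\; \lim_{i \to \infty} \frac{\isv{M_i}}{[\Gamma : \Gamma_i]}, \]
where $M_i \to M$ denotes the finite cover corresponding to~$\Gamma_i$. Combining these two facts shows that the right-hand limit vanishes for every residual chain, which is the ``more precisely'' assertion of the corollary. Since $\stisv M \leq \isv{M_i}/[\Gamma : \Gamma_i]$ for every~$i$, this also yields $\stisv M = 0$, and the elementary inequality $\|M\| \leq \stisv M$ then forces $\|M\| = 0$.

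The only non-formal step is the appeal to the approximation theorem of~\cite{FFL}, which is precisely where residual finiteness enters; the remaining steps are bookkeeping. I do not anticipate any new technical obstacle beyond Theorem~\ref{thm:main} itself.
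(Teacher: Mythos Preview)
Your proposal is correct and follows essentially the same approach as the paper: apply Theorem~\ref{thm:main} to the profinite/inverse-limit parameter space arising from residual finiteness, invoke the approximation theorem identifying the resulting parametrised simplicial volume with (the limit of) normalized integral simplicial volumes, and finish with~$\|M\| \leq \stisv M$. The only cosmetic difference is that the paper cites~\cite{FLPS}*{Theorem~2.6} and~\cite{LP} for the approximation result rather than~\cite{FFL}, and it first obtains~$\stisv M = \ifsvp M {\widehat{\pi_1(M)}}$ directly before remarking that the same argument handles arbitrary residual chains, whereas you treat the residual-chain case first and deduce~$\stisv M = 0$ afterwards.
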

\begin{proof}
  As $\pi_1(M)$ is residually finite, the profinite
  completion~$\widehat{\pi_1(M)}$ endowed with its normalized Haar measure and the action by left translations constitutes a free
  standard $\pi_1(M)$-space, and we have~\citelist{\cite{FLPS}*{Theorem~2.6}\cite{LP}*{Theorem~6.6 and Remark~6.7}}
  \[ \stisv M = \ifsvp M {\widehat{\pi_1(M)}}.
  \]
  We can now apply Theorem~\ref{thm:main} and the general
  estimate~$\|M\| \leq \stisv M$.

  This also works in the situation of general residual
  chains~\cite{FLPS}*{Theorem~2.6}.
\end{proof}

\subsection{Applications to homology growth and groups}

From The\-o\-rem~\ref{thm:main}, we obtain corresponding vanishing results
for the $L^2$-Betti
numbers~\citelist{\cite{gromov_metric}*{p.~307}\cite{mschmidt}*{Corollary~5.28}}
(and whence the Euler characteristic), logarithmic torsion
growth~\citelist{\cite{FLPS}*{Theorem~1.6}\cite{sauergrowth}}, cost,
and the rank gradient~\cite{loehrg}.

The cost of a group~\cite{gaboriau_cost} is a dynamical version of the
rank gradient, which is the gradient invariant associated with the
minimal number of generators~\cite{lackenby}.

\begin{cor}\label{cor:cheap}
  Let $M$ be an oriented closed connected aspherical
  manifold with~$\amcat M \leq \dim M \neq 0$.
  \begin{enumerate}
  \item Then $\pi_1(M)$ is of fixed price and cheap.
    Thus, $\cost \pi_1(M) = 1$.
  \item If, in addition, $\pi_1(M)$ is residually finite,
    then $\rg (\pi_1(M),\Gamma_*) =0$ for all Farber chains~$\Gamma_*$
    of~$\pi_1(M)$. In particular, $\rg \pi_1(M) = 0$.
  \end{enumerate}
\end{cor}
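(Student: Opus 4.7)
The plan is straightforward: feed Theorem~\ref{thm:main} into the comparison inequalities of~\cite{loehrg} that bound cost and rank gradient from above by integral foliated simplicial volume of aspherical manifolds.

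For part~(1), I would first dispose of the case $\dim M = 1$ (where $M = S^1$ and everything is classical). For $\dim M \geq 2$, the argument proceeds as follows. For any essentially free standard $\pi_1(M)$-space~$\alpha$, the cost of the associated orbit equivalence relation $\Rrel_\alpha$ satisfies an inequality of the form
\[
  \cost_\mu(\Rrel_\alpha) \leq 1 + \ifsvp M \alpha
\]
established in~\cite{loehrg}. The right-hand side equals~$1$ by Theorem~\ref{thm:main}. Since $\pi_1(M)$ is infinite (an aspherical closed manifold of positive dimension has nontrivial torsion-free fundamental group), the cost of any essentially free standard action of $\pi_1(M)$ is at least~$1$, so $\cost_\mu(\Rrel_\alpha) = 1$. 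As $\alpha$ was arbitrary, $\pi_1(M)$ is of fixed price~$1$ and in particular cheap; the equality $\cost \pi_1(M) = 1$ follows.

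For part~(2), residual finiteness of $\Gamma := \pi_1(M)$ ensures that Farber chains exist. Given such a chain $\Gamma_* = (\Gamma_i)_{i \in \N}$, the inverse limit $\varprojlim_i \Gamma/\Gamma_i$, with its normalized Haar measure and left-translation action, is an essentially free standard $\Gamma$-space. The rank gradient estimate from~\cite{loehrg} then reads
\[
  \rg(\Gamma, \Gamma_*) \leq \ifsvp M {\varprojlim_i \Gamma/\Gamma_i},
\]
and Theorem~\ref{thm:main} again makes the right-hand side vanish. The statement about $\rg \pi_1(M)$ follows since this holds for every Farber chain.

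The whole argument is essentially a substitution of Theorem~\ref{thm:main} into two existing inequalities. The only step that requires care is checking that the two bounds from~\cite{loehrg} apply in precisely the generality needed here (aspherical manifolds, arbitrary essentially free standard parameter spaces, and arbitrary Farber chains), but this is exactly the setting in which they are formulated, so I do not anticipate a real obstacle.
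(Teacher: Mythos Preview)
Your proposal is correct and, for part~(1), matches the paper's argument exactly: Theorem~\ref{thm:main} plus the cost estimate via integral foliated simplicial volume (the paper cites this as~\cite{loeh_cost}).

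For part~(2) there is a small but genuine difference. The paper does \emph{not} invoke a direct rank-gradient-via-$\ifsv{\args}$ inequality; instead it deduces~(2) from~(1) via the Ab\'ert--Nikolov theorem~\cite{abertnikolov}, which identifies the rank gradient along a Farber chain with $\cost - 1$ of the associated boundary action. Your route --- bounding~$\rg(\Gamma,\Gamma_*)$ directly by~$\ifsvp M {\varprojlim_i \Gamma/\Gamma_i}$ and then applying Theorem~\ref{thm:main} --- is equally valid and slightly more self-contained (one black box instead of two). The paper's route, on the other hand, makes the logical dependence of~(2) on~(1) explicit and avoids having to check separately that the inverse-limit action is essentially free for an arbitrary Farber chain.
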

\begin{proof}
  The first part follows from Theorem~\ref{thm:main} and the
  cost estimate via integral foliated simplicial volume~\cite{loeh_cost}.
  The second part follows from the first part and the computation of the
  rank gradient via cost~\cite{abertnikolov}.
\end{proof}

We can view Corollary~\ref{cor:cheap} as an extension of the
vanishing results for cost and rank gradients of amalgamated
free products of amenable groups~\cites{gaboriau_cost,pappas}.

Moreover, we obtain a new proof of vanishing for homology growth and
logarithmic torsion growth~\cite[Theorem~1.6]{sauergrowth}. Note that
the statement for $\mathbb{F}_p$-coefficients there is equivalent to
the statement for all principal ideal domains.  Via the bounds of
homology growth by the stable integral simplicial
volume~\cite{FLPS}*{Theorem~1.6} and Theorem~\ref{thm:main}
we obtain:

\begin{cor}
  Let $M$ be an oriented closed connected aspherical manifold
  with~$\amcat M \leq \dim M \neq 0$ and residually finite
  fundamental group. Let $(\Gamma_j)_{j \in \N}$
  be a residual chain in~$\Gamma := \pi_1(M)$. Moreover, let $k \in \N$.
  Then:
  \begin{enumerate}
  \item If $R$ is a principal ideal domain, then
    \[ \limsup_{j \to \infty} \frac{\rk_R H_k(\Gamma_j;R)}{[\Gamma:\Gamma_j]} = 0.
    \]
  \item We have
    \[ \limsup_{j \to\infty} \frac{\log \;\bigl| \tors H_k(\Gamma_j;\Z) \bigr|}{[\Gamma:\Gamma_j]} = 0.
    \]
  \end{enumerate}
\end{cor}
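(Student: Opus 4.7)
The plan is to combine Corollary~\ref{cor:intapprox}, which applies verbatim under the hypotheses here, with the homology growth bounds of~\cite{FLPS}*{Theorem~1.6} that control ranks and torsion of homology of finite coverings in terms of the stable integral simplicial volume. This is precisely the strategy already flagged in the sentence preceding the corollary.

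The first step is to observe that Corollary~\ref{cor:intapprox} yields $\stisv M = 0$: the hypotheses on $\amcat M$, asphericity and residual finiteness match exactly. Next, since $M$ is aspherical and each $\Gamma_j \leq \Gamma$ has finite index, the associated covering $M_j \to M$ is an oriented closed connected aspherical manifold of dimension $\dim M$ with fundamental group~$\Gamma_j$; asphericity then gives natural identifications $H_k(\Gamma_j;R) \cong H_k(M_j;R)$ for every coefficient ring $R$ and every $k \in \N$. Hence it suffices to prove the two $\limsup$ statements with $M_j$ in place of $\Gamma_j$.

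Finally, I would invoke~\cite{FLPS}*{Theorem~1.6}, which asserts that for any residual chain in the fundamental group of an oriented closed connected aspherical manifold, both $\limsup_{j \to \infty} \rk_R H_k(M_j;R) / [\Gamma:\Gamma_j]$ and $\limsup_{j \to \infty} \log \bigl| \tors H_k(M_j;\Z) \bigr| / [\Gamma:\Gamma_j]$ are bounded above by a constant multiple of~$\stisv M$. Combined with $\stisv M = 0$, both quantities vanish, which yields the corollary. The only small bookkeeping point is the equivalence between the $\mathbb{F}_p$-formulation and the PID-formulation of~\cite{FLPS}*{Theorem~1.6}, already noted in the preceding paragraph of the excerpt; this follows from the universal coefficient theorem for finitely generated modules over a PID. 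There is no substantive obstacle: the argument is a direct chain of citations, with asphericity used only to pass between group and manifold homology.
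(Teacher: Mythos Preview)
Your proposal is correct and follows essentially the same route as the paper: the paper's proof is the single sentence preceding the corollary, which combines the homology growth bounds of~\cite{FLPS}*{Theorem~1.6} (in terms of~$\stisv M$) with Theorem~\ref{thm:main}, and your use of Corollary~\ref{cor:intapprox} to get~$\stisv M = 0$ is exactly how Theorem~\ref{thm:main} enters. The passage from group homology to manifold homology via asphericity and the $\mathbb{F}_p$/PID remark are both implicit in the paper's one-line argument.
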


\subsection*{Organisation of this article}

Section~\ref{sec:prelim} introduces basic terminology.
An outline of the proof of Theorem~\ref{thm:main} is
given in Section~\ref{sec:outline}. The Rokhlin chain
maps are developed in Section~\ref{sec:rokhlin}.
The topological steps are carried out in Section~\ref{sec:subdiv}.
Finally, Section~\ref{sec:proof} wraps up the proof of Theorem~\ref{thm:main}.
Section~\ref{sec:exa} contains examples of manifolds with
small~$\amcat$.


\subsection*{Acknowledgements}

We would like to thank Caterina Campagnolo for useful discussions about foliations.
We are grateful to the anonymous referee for the detailed and helpful comments.

\section{Preliminaries}\label{sec:prelim}

We recall basic notions on integral foliated simplicial volume,
amenable open covers, and resolutions.

\subsection{Simplicial volume}

The simplicial volume is a homotopy invariant of closed manifolds introduced 
by Gromov~\cite{vbc}. Let $M$ be an oriented closed connected 
$n$-manifold and let $R$ be a normed ring with unit, e.g., $\R$ or~$\Z$. 
For a singular $R$-chain $c = \sum_{i = 1}^k a_i \sigma_i \in C_n(M; R)$, 
one defines the \emph{$\ell^1$-norm} by 
$$
|c|_1 := \sum_{i = 1}^k |a_i|.
$$
This norm induces a seminorm on singular homology:
Given $\alpha \in H_n(M; R)$, the \emph{$\ell^1$-seminorm} of $\alpha$
is defined as
$$
\|\alpha\|_1 := \inf \bigl\{|c|_1 \bigm| \text{$c \in C_n(M; R)$  is a cycle representing~$\alpha$}\bigr\}.
$$
By taking the $\ell^1$-seminorm of a preferred class in homology we obtain
the definition of simplicial volume:
\begin{defi}[$R$-simplicial volume]
Let $M$ be an oriented closed connected $n$-manifold. 
The \emph{$R$-simplicial volume} of $M$ is defined as
$$
\|M \|_R := \bigl\|[M]_R \bigr\|_1,
$$
where $[M]_R \in H_n(M; R)$ denotes the $R$-fundamental class of $M$.

When $R = \R$ we will simply write $\|M\| := \|M\|_\R$ and we will
refer to it simply as the \emph{simplicial volume of $M$}.
\end{defi}

\begin{rem}\label{rem:isv:bigger:1}
By definition, we have
$\|M\| \leq \|M\|_{\Z}$ and $1 \leq \|M\|_{\Z}$ (if $M$ is non-empty).
\end{rem}

Simplicial volume is known to be positive in many cases:
e.g. hyperbolic manifolds~~\cites{Thurston,
    vbc}, locally symmetric spaces of
    non-compact type~\cites{Bucher:lss, lafontschmidt}
    and manifolds with sufficiently negative curvature~\cites{inoueyano,
    connellwang}.
    Two major sources of vanishing of simplicial volume are~\cite{vbc}:
    \begin{itemize}
    \item If $M$ is an oriented closed connected $n$-manifold
      with~$\amcat M \leq \dim M$, then $\|M\| = 0$.
    \item If $M$ is an oriented closed connected $n$-manifold
      that admits a self-map~$f$ with~$|\deg f| \geq 2$,
      then $\|M\| = 0$.
    \end{itemize}
    A more comprehensive list of examples can be found in
    the literature~\cite{loehmoraschiniraptis}.

    In general integral simplicial volume and
    simplicial volume are far from being equal
    (Remark~\ref{rem:isv:bigger:1}). The situation becomes
    more interesting after stabilisation~\cites{gromov_metric,FFM}:
    
\begin{defi}[stable integral simplicial volume]
Let $M$ be an oriented closed connected $n$-manifold. 
The \emph{stable integral simplicial volume of $M$} is defined as
$$
\stisv M := \inf\Bigl\{\frac{\|N\|_{\Z}}{d}
\Bigm| d \in \N,\ N \mbox{ is a $d$-sheeted covering space of } M
\Bigr\}.
$$
\end{defi}

Similarly to Question~\ref{quest:ifsv:vanishes}, working with stable
integral simplicial volume the leading question is the following:

\begin{question}\label{quest:IA}
Let $M$ be an oriented closed connected aspherical $n$-manifold with 
$\|M\| = 0$ and residually finite fundamental group.
Then, do we have $\stisv M = 0$?
\end{question}

The question is known to have a positive answer in the 
the case of aspherical surfaces~\cites{vbc}, aspherical $3$-manifolds~\cites{FFL, FLMQ},
smooth aspherical manifolds admitting a smooth $S^1$-action without fixed points~\cite{FauserS1}, 
smooth aspherical manifolds admitting a regular smooth circle foliation with additional properties~\cite{Camp-Corro}, generalised graph manifolds~\cite{FFL}, and closed aspherical manifolds with vanishing minimal volume~\cite{braunsauer}. 

\subsection{Integral foliated simplicial volume}

Integral foliated simplicial volume is defined via simplicial
volumes with twisted coefficients in~$\linfz X$. The basic
idea is that $\linfz X$ provides enough rigidity through the
constraint that functions are integer-valued, but also enough
flexibility through partitions of~$X$. We recall the
terminology in more detail:

Let $\Gamma$ be a countable group. A \emph{standard $\Gamma$-space}
is a measure preserving action~$\Gamma \actson (X,\mu)$ of~$\Gamma$
on a standard Borel probability space~$(X,\mu)$. We then equip~$\linfz X$
with the $\Z\Gamma$-module structure given by
\[ \gamma \cdot f := \bigl(x \mapsto f(\gamma^{-1}\cdot x)\bigr)
\]
for all~$\gamma \in \Gamma$ and all~$f \in \linfz X$. We 
consider homology of spaces with fundamental group~$\Gamma$
with twisted coefficients in~$\linfz X$.

\begin{defi}
  Let $M$ be a connected CW-complex (or a path-connected space
  that admits a universal covering) with countable fundamental
  group~$\Gamma$ and let $\alpha \colon \Gamma \actson(X,\mu)$ be a
  standard $\Gamma$-space.

  Let $n\in \N$ and let $x \in H_n(M;\Z)$. We write
  $x^\alpha \in H_n\bigl(M; \linfz X\bigr)
  $
  for the image of~$x$ under the change of coefficients
  map~$\Z \hookrightarrow \linfz X$.

  We define the \emph{$\alpha$-parametrised norm of~$x$} by
  \[ \ifsvp x \alpha
  := \inf \bigl\{ |z|_1 \bigm| z \in \linfz X \otimes_{\Z \Gamma} C_*(\widetilde M;\Z)
  \text{ is a cycle representing~$x^\alpha$}\bigr\},
  \]  
  where
  \[ \biggl| \sum_{i \in I} f_i \otimes \sigma_i \biggr|_1
  := \sum_{i \in I} \int_X |f_i|\;d\mu
  \]
  for all chains~$\sum_{i \in I} f_i \otimes \sigma_i \in \linfz X
  \otimes_{\Z\Gamma} C_*(\widetilde M;\Z)$ in reduced form.
\end{defi}

\begin{defi}[integral foliated simplicial volume~\cites{gromov_metric, mschmidt}]
  Let $M$ be an oriented closed connected $n$-manifold with
  fundamental group~$\Gamma$.
  \begin{itemize}
  \item If $\alpha \colon \Gamma \actson (X,\mu)$ is a standard
    $\Gamma$-space, then the \emph{$\alpha$-parametrised simplicial
    volume} of~$M$ is defined as
    \[ \ifsvp M \alpha := \ifsvp[big]{[M]} \alpha.
    \]
  \item The \emph{integral foliated simplicial volume~$\ifsv M$ of~$M$}
    is defined as the infimum of~$\ifsvp M \alpha$ over all standard
    $\Gamma$-spaces~$\alpha$.
  \end{itemize}
\end{defi}

For further background on the integral foliated simplicial volume,
we refer to the literature~\cites{mschmidt, loehergodic}.

\subsection{Amenable open covers}

We recall amenable open covers; more information on amenable groups
and their dynamical properties can be found in the
literature. 

\begin{defi}[amenable subset, amenable open cover]
  Let $M$ be a topological space.
  \begin{itemize}
  \item A subset~$U \subset M$ is called \emph{amenable} if the
    subgroup~$\im (\pi_1(U,x) \to \pi_1(M,x))$ of~$\pi_1(M,x)$ is
    amenable for every~$x \in U$.
  \item An open cover of~$M$ is called \emph{amenable} if each
    member is amenable in the sense above.
  \end{itemize}
\end{defi}

\subsection{Resolutions from free actions}

We fix basic notation concerning resolutions and recall the fundamental theorem of homological algebra. 

\begin{defi}\label{def:extends}
  Let $\Gamma$ be a group, let $A$ and $B$ be $\Z\Gamma$-modules,
  and let $\varepsilon \colon C_* \to A$ and $\eta \colon D_* \to B$
  be augmented $\Z\Gamma$-chain complexes. A $\Z\Gamma$-chain
  map~$\varphi \colon C_* \to D_*$ \emph{extends} a $\Z\Gamma$-homomorphism~$f \colon A \to B$
  if
  \[ \eta \circ \varphi_0 = f \circ \varepsilon.
  \]
\end{defi}

\begin{lem}[Fundamental theorem of homological algebra]\label{lem:extendsunique}
  Let $\Gamma$ be a group, let $B$ be a $\Z\Gamma$-module, let
  $\varepsilon \colon C_* \to \Z$ be a projective
  $\Z\Gamma$-resolution of~$\Z$, and let $\eta \colon D_* \to B$ be a
  $\Z\Gamma$-resolution of~$B$ (not necessarily projective). 
  If $\varphi, \varphi' \colon C_* \to D_*$ extend the
  same $\Z\Gamma$-homomorphism~$\Z \to B$, then we have $\varphi
  \simeq_{\Z\Gamma} \varphi'$.
\end{lem}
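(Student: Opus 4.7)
The plan is the standard inductive construction of a chain homotopy, exploiting the projectivity of the source resolution and the acyclicity (in positive degrees) and augmentedness of the target resolution. Set $\psi := \varphi - \varphi' \colon C_* \to D_*$; then $\psi$ is a $\Z\Gamma$-chain map extending the zero homomorphism~$\Z \to B$, and it suffices to produce a $\Z\Gamma$-chain homotopy~$h_* \colon \psi \simeq 0$, i.e., a sequence of $\Z\Gamma$-homomorphisms $h_n \colon C_n \to D_{n+1}$ with
\[
  \psi_n = \partial^D_{n+1} \circ h_n + h_{n-1} \circ \partial^C_n
\]
for all $n \geq 0$ (with the convention $h_{-1} = 0$).

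First I would handle the base case $n = 0$. Since $\psi$ extends the zero map, we have $\eta \circ \psi_0 = 0$, so the image of $\psi_0$ lies in $\ker \eta = \im \partial^D_1$ by acyclicity of $D_*$ at~$D_0$. Because $C_0$ is projective, the surjection $\partial^D_1 \colon D_1 \twoheadrightarrow \im \partial^D_1$ can be lifted through $\psi_0$, yielding $h_0 \colon C_0 \to D_1$ with $\partial^D_1 \circ h_0 = \psi_0$, as required.

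For the inductive step, suppose $h_0,\dots,h_{n-1}$ have been constructed satisfying the homotopy identity in degrees~$\leq n-1$. Consider the map $\psi_n - h_{n-1} \circ \partial^C_n \colon C_n \to D_n$. A direct computation, using the induction hypothesis in degree~$n-1$ together with $\partial^C \circ \partial^C = 0$, shows
\[
  \partial^D_n \circ (\psi_n - h_{n-1} \circ \partial^C_n)
  = \psi_{n-1} \circ \partial^C_n - (\psi_{n-1} - h_{n-2} \circ \partial^C_{n-1}) \circ \partial^C_n = 0.
\]
Hence $\psi_n - h_{n-1} \circ \partial^C_n$ takes values in $\ker \partial^D_n = \im \partial^D_{n+1}$ by acyclicity of~$D_*$ in positive degrees. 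Projectivity of~$C_n$ then produces the desired lift $h_n \colon C_n \to D_{n+1}$ with $\partial^D_{n+1} \circ h_n = \psi_n - h_{n-1} \circ \partial^C_n$.

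There is no real obstacle here; this is the classical comparison theorem and everything is forced once one writes down what is needed. The only points to keep straight are the correct sign and augmentation conventions at the bottom of the resolutions (so that acyclicity of~$D_*$ at $D_0$ really reads as $\ker \eta = \im \partial^D_1$) and the fact that $D_*$ need \emph{not} be projective: only $C_*$ needs projectivity, which is exactly what allows the successive lifts against the surjections $D_{n+1} \twoheadrightarrow \ker \partial^D_n$.
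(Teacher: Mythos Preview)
Your proof is correct and is precisely the classical argument for the comparison theorem. The paper itself does not supply a proof of this lemma; it is stated as a standard fact from homological algebra and used as a black box later on. So there is nothing to compare against: you have filled in the textbook construction that the authors take for granted.
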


\begin{example}\label{exa:actionres}
  Let $\Gamma$ be a group and let $\Gamma \actson E$ be a free
  $\Gamma$-action on a set~$E$. For each~$n \in \N$, the product~$E^{n+1}$
  carries the diagonal $\Gamma$-action. Then $\Z[E^{*+1}]$ with the simplicial
  boundary operator given by
  \[ \partial_n(e_0,\dots, e_n)
  := \sum_{j=0}^n (-1)^j \cdot (e_0,\dots, \widehat e_j, \dots, e_n)
  \]
  for all~$n \in \N$, $(e_0, \dots, e_n) \in E^{n+1}$, together with the
  augmentation
  \begin{align*}
    \varepsilon_E \colon \Z[E^{0+1}] & \longrightarrow \Z
    \\
    E \ni e_0 & \longmapsto 1,
  \end{align*}
  is a free $\Z\Gamma$-resolution of the trivial $\Z\Gamma$-module~$\Z$.
\end{example}

\begin{example}\label{exa:asphres}
  Let $M$ be a path-connected aspherical space with fundamental
  group~$\Gamma$.  Then the singular chain complex~$C_*(\widetilde
  M;\Z)$ of the universal covering~$\widetilde M$ of~$M$, together
  with the augmentation~$C_0(\widetilde M;\Z) \to \Z$ mapping singular
  $0$-simplices to~$1$, is a free $\Z\Gamma$-resolution of the trivial
  $\Z\Gamma$-module~$\Z$.
\end{example}

\begin{defi}[essential chains, essential norm] 
  Let $\Gamma$ be a group, let $E$ be a free $\Gamma$-set, 
  and let $n \in \N$.
  \begin{itemize}
  \item A tuple~$(x_0,\dots, x_n) \in E^{n+1}$ is \emph{degenerate}
    if there are~$k,l \in \{0,\dots,n\}$ with~$k \neq l$ and~$x_k = x_l$.
    Non-degenerate tuples are called \emph{essential}.
  \item  
    We write~$\esst E {n+1}$ for the set of all essential tuples in~$E^{n+1}$.
  \item
    For a normed $\Z$-module~$A$ and a reduced chain
    $c = \sum_{x \in E^{n+1}} a_x \otimes x \in A \otimes_{\Z} \Z[E^{n+1}]$,
    we set
    \begin{align*}
      \essn c
      & := \sum_{x \in \esst E {n+1}} |a_x|,
      \\
      |c|_1
      & := \sum_{x \in E^{n+1}} |a_x|.
    \end{align*}
  \end{itemize}
\end{defi}

Note that $\essn \args$ is a semi-norm on $A \otimes_{\Z} \Z[E^{n+1}]$ whose restriction to essential chains is a norm.

\section{Outline of the proof of Theorem~\ref{thm:main}}\label{sec:outline}

The proof of Theorem~\ref{thm:main} is a dynamical version of
the amenable reduction lemma~\cites{gromov-cycles,alpert, alpertkatz}. 
As in Theorem~\ref{thm:main}, let $M$ be an oriented closed connected
smooth aspherical manifold with~$\amcat M \leq \dim M$ and let $\alpha
\colon \pi_1(M) \actson (X,\mu)$ be an essentially free standard
$\pi_1(M)$-space.
Let $(U_i)_{i \in I}$ be a finite amenable open cover of~$M$
with multiplicity at most~$n := \dim M$.  Let $(\Gamma_i)_{i \in I}$
be the (amenable) subgroups of~$\Gamma := \pi_1(M)$ associated
with~$(U_i)_{i \in I}$.

To prove that $\ifsvp M \alpha = 0$, we proceed in the following
steps:  Let $E := \Gamma \times I$, with
the $\Gamma$-translation action on the first factor.  Because $M$ is
aspherical, we can replace the $\Z\Gamma$-chain
complex~$C_*(\widetilde M;\Z)$ by~$\Z[E^{*+1}]$.
\begin{itemize}
\item 
  Refining the open cover~$(U_i)_{i \in I}$ and using a subdivision
  process, we obtain a fundamental cycle
  \[ z' = \sum_{\sigma \in E^{n+1}} a_\sigma \otimes \sigma \in \Z \otimes_{\Z \Gamma} \Z[E^{n+1}]
  \]
  of~$M$ with the following property:
  If $\sigma = ((\gamma_0, i_0), \dots, (\gamma_n, i_n))$
  is a simplex with~$a_\sigma \neq 0$,
  then $\sigma$ satisfies the ``colouring condition'' 
  \[ \exi{j,k \in \{0,\dots,n\}} j \neq k
  \qand
  i_j = i_k
  \qand 
  \gamma_j^{-1} \cdot \gamma_k \in \Gamma_{i_j} = \Gamma_{i_k}.
  \]
  This is a pigeon-hole principle argument based on~$\mult
  U \leq n$.
\item
  In the complex~$\Z[E^{*+1}]$, we
  have an additional degree of freedom. Using Rokhlin partitions
  of~$X$ coming from the amenable groups~$\Gamma_i$, we find good
  equivariant Borel partitions of the $\Gamma$-space~$X \times E$;
  such partitions lead to the Rokhlin chain map~$\Z[E^{*+1}] \to \linfz X
  \otimes_{\Z} \Z[\dots^{*+1}]$. We apply these Rokhlin chain
  maps to~$z'$.
  Because $z'$ satisfies the colouring condition, each simplex
  in~$z'$ has one edge that is susceptible to amenability of
  the~$\Gamma_i$; thus, 
  for each~$\delta \in \R_{>0}$ we
  find a fundamental cycle~$z'_\delta$ that admits a
  decomposition
  \begin{align*}
    z'_\delta
    =
    & \;\text{a chain consisting of degenerate simplices}
    \\
    +
    & \;\text{a chain of norm in~$o(\delta)$}.
  \end{align*}
  Here, the ``almost invariance'' property of the amenable
  groups~$\Gamma_i$ is reflected in the fact that almost everything
  cancels, except for degenerate terms. 
\end{itemize}
As degenerate simplices can be neglected when considering
the $\ell^1$-semi-norm on homology, taking~$\delta \to 0$
shows that $\ifsvp M \alpha = 0$. This strategy can be subsumed
in the following diagram:
\tikzset{away/.style={outer sep=.5em}}
\[ \begin{tikzcd}
  C_*(\widetilde M;\Z)
  \arrow[r,away,"\text{equivariant subdivision}"]
  \arrow[d,dashed,away,"\text{$\simeq$ incl}"']
  & \Z[E^{*+1}]
  \ar[d,away,"\text{Rokhlin}"]
  \\
   \linfz X \otimes_\Z C_*(\widetilde M;\Z)
  & \linfz X \otimes_\Z \Z[\ldots^{*+1}]
   \ar[l,away,"\text{filling}"]
   \ar[loop right,dashed,"\text{essentialness}"]
  \end{tikzcd}
\]

Section~\ref{sec:rokhlin} is devoted to the Rokhlin map;
Section~\ref{sec:subdiv} contains the subdivision argument. 
The actual proof of Theorem~\ref{thm:main} is given in
Section~\ref{sec:proof}. 

\section{The Rokhlin chain map}\label{sec:rokhlin}

We construct the Rokhlin chain map, we determine its
effect on homology, and -- most importantly -- establish
the key norm estimates.

We first explain how Borel partitions lead to chain maps.
The Rokhlin chain maps then are the chain maps associated
with Rokhlin partitions.

\subsection{Borel chains}

Let $\Gamma \actson (X,\mu)$ be a standard $\Gamma$-space and let $S$
be a countable free $\Gamma$-set. We endow~$X \times S^{n+1}$ with the
diagonal $\Gamma$-action and the product measure of~$\mu$ and the
counting measure on~$S$.
We write
\[ \lfsz{X \times S^{n+1}}
\]
for the submodule of~$\linfz{X \times S^{n+1}}$ consisting of
those (equivalence classes of) essentially bounded
functions~$X \times S^{n+1} \to \Z$ whose support is
contained in a set of the form~$X \times F$, where
$F \subset S^{n+1}$ is finite.
For~$n \in \N_{>0}$, we let
\begin{align*}
  \partial_n \colon \lfsz {X \times S^{n+1}}
  & \longrightarrow \lfsz {X \times S^n}
  \\
  f
  & \longmapsto
  \biggl(
  (x,s)
  \mapsto
  \sum_{j=0}^n (-1)^j
  \cdot \sum_{t \in S}
  f(x, s_0,\dots, s_{j-1}, t, s_j, \dots, s_{n-1})
  \biggr).
\end{align*}
This turns~$\lfsz{X \times S^{*+1}}$ into a $\Z\Gamma$-chain
complex. 
We consider the augmentation map
\begin{align*}
  \widetilde \varepsilon_S
  \colon \lfsz {X \times S^{0+1}}
  & \longrightarrow \linfz X
  \\
  f
  & \longmapsto \sum_{s \in S} f(\args,s).
\end{align*}
Furthermore, we equip~$\lfsz {X \times S^{*+1}}$ with the
\emph{essential norm} given by
\[ \essn f := \sum_{s \in \esst S {n+1}} \int_X |f(\args,s)| \;d\mu(x)
\]
for all~$f \in \lfsz {X \times S^{n+1}}$.

\begin{lem}\label{lem:lfsz}
  Let $\Gamma$ be a countable group, let $S$ be a free $\Gamma$-set,
  and let $\Gamma \actson (X,\mu)$ be a 
  standard $\Gamma$-space.
  Then
  \begin{align*}
    \xi_S \colon 
    \linfz X \otimes_\Z \Z[S^{*+1}]
    & \longrightarrow \lfsz{X \times S^{*+1}}
    \\
    f \otimes s
    & \longmapsto
    \left(
    (x,t) \mapsto
    \begin{cases}
      f(x) & \text{if $t = s$}
      \\
      0 & \text{if $t \neq s$}
    \end{cases}
    \right)
  \end{align*}
  and
  \begin{align*}
    \zeta_S \colon 
    \lfsz{X \times S^{*+1}}
    & \longrightarrow \linfz X \otimes_\Z \Z[S^{*+1}]
    \\
    f
    & \longmapsto
    \sum_{s \in S^{n+1}}
    f(\args,s) \otimes s
  \end{align*}
  are well-defined mutually inverse $\Z\Gamma$-chain isomorphisms,
  where the left hand side is endowed with the diagonal $\Gamma$-action. 

  These chain maps are compatible with the
  augmentations~$\id_{\linfz X} \otimes_\Z \varepsilon_S$ and
  $\widetilde \varepsilon_S$ as well as with the essential norms~$\essn{\args}$.  
\end{lem}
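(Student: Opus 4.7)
The strategy is to verify each claim mechanically using the fact that functions in $\lfsz{X \times S^{n+1}}$ have support contained in a ``cylinder'' $X \times F$ with $F \subset S^{n+1}$ finite. This finiteness is the only subtle point: it is exactly what makes $\zeta_S$ land in the tensor product (which consists of \emph{finite} sums) and guarantees that all the relevant sums below are finite.

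First I would check that $\xi_S$ and $\zeta_S$ are well-defined and mutually inverse. For $\xi_S$, I would write it as the $\Z$-bilinear extension of $(f,s) \mapsto \chi_{\{s\}} \cdot f$ viewed as a function on $X \times S^{n+1}$ supported on $X \times \{s\}$; bilinearity over $\Z$ is immediate. For $\zeta_S$, the sum $\sum_{s \in S^{n+1}} f(\args,s) \otimes s$ has only finitely many nonzero terms by the finite-support assumption on $f$, so it defines an element of $\linfz X \otimes_\Z \Z[S^{n+1}]$. Computing $(\zeta_S \circ \xi_S)(f \otimes s) = f \otimes s$ and $(\xi_S \circ \zeta_S)(f)(x,t) = f(x,t)$ is direct from the definitions. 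Hence both maps are $\Z$-linear bijections.

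Next I would verify $\Gamma$-equivariance and the chain map property. For $\gamma \in \Gamma$, $f \in \linfz X$, $s = (s_0,\dots,s_n) \in S^{n+1}$, the function $\xi_S(\gamma \cdot f \otimes \gamma \cdot s)$ is supported on $X \times \{\gamma \cdot s\}$ and equals $x \mapsto f(\gamma^{-1} x)$ there, which is precisely $\gamma \cdot \xi_S(f \otimes s)$ under the diagonal action on $X \times S^{n+1}$. For the chain map property, it suffices to check it on generators $f \otimes s$: applying $\partial_n$ to $\xi_S(f \otimes s)$ evaluates at $(x,t)$ to $\sum_j (-1)^j \sum_{u \in S} \xi_S(f \otimes s)(x, t_0,\dots,u,\dots,t_{n-1})$, and the inner summand is nonzero only when $(t_0,\dots,u,\dots,t_{n-1}) = s$, forcing $u = s_j$ and $t = (s_0,\dots,\widehat{s_j},\dots,s_n)$; this yields exactly $\xi_S(\partial_n(f \otimes s))$. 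Compatibility with the augmentations is the case $n = 0$ of the same computation.

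Finally I would compare the norms. For a reduced chain $c = \sum_{s \in S^{n+1}} f_s \otimes s$, the function $\xi_S(c)$ satisfies $\xi_S(c)(x,s) = f_s(x)$, so
\[
  \essn{\xi_S(c)} = \sum_{s \in \esst{S}{n+1}} \int_X |f_s|\, d\mu = \essn c,
\]
and similarly $\essn{\zeta_S(f)} = \essn f$. I expect the only point requiring real care is the finite-support condition in the definition of $\lfsz{X \times S^{n+1}}$: without it, $\zeta_S$ would not take values in the algebraic tensor product, and $\partial_n$ would not even be well-defined (the sum over $t \in S$ would be infinite). Once the bookkeeping of supports is set up, everything else is a direct verification.
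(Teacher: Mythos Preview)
Your proposal is correct and is precisely the ``straightforward computation'' that the paper invokes as its entire proof; you have simply written out the details that the paper leaves to the reader, correctly identifying the finite-support condition as the one point that makes $\zeta_S$ and $\partial_n$ well-defined.
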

\begin{proof}
  This is a straightforward computation.
\end{proof}

\begin{cor}\label{cor:lfszres}
  In the situation of Lemma~\ref{lem:lfsz},
  $\widetilde\varepsilon_S \colon \lfsz{X \times {S^{*+1}}} \to \linfz X$
  is a $\Z\Gamma$-resolution of~$\linfz X$.
\end{cor}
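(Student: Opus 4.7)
The plan is to transport exactness across the chain isomorphism from Lemma~\ref{lem:lfsz}. Since $\xi_S$ and $\zeta_S$ are mutually inverse $\Z\Gamma$-chain isomorphisms intertwining $\id_{\linfz X} \otimes_\Z \varepsilon_S$ with $\widetilde\varepsilon_S$, it suffices to show that the augmented complex
\[ \cdots \longrightarrow \linfz X \otimes_\Z \Z[S^{2}] \longrightarrow \linfz X \otimes_\Z \Z[S^{1}] \xrightarrow{\id \otimes \varepsilon_S} \linfz X \longrightarrow 0 \]
is exact as a sequence of abelian groups; the $\Z\Gamma$-module structure and $\Z\Gamma$-linearity of all maps are already built into both sides of the isomorphism and are therefore automatic once exactness is verified.

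For the exactness itself, I would appeal to Example~\ref{exa:actionres}, which states that $\varepsilon_S \colon \Z[S^{*+1}] \to \Z$ is a free $\Z\Gamma$-resolution of $\Z$. As a sequence of abelian groups it is therefore exact, and moreover each term $\Z[S^{n+1}]$ is a free abelian group. Applying $\linfz X \otimes_\Z -$ thus computes $\mathrm{Tor}^\Z_*(\Z, \linfz X)$, which vanishes in positive degrees (since $\Z$ is flat over itself) and equals $\linfz X$ in degree zero. This gives the desired exactness and hence the corollary.

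If one prefers to avoid the Tor-language, the same result follows from an explicit chain contraction: fix a base point $e \in S$ and define $\Z$-linear maps $h_{-1} \colon \Z \to \Z[S]$ by $1 \mapsto e$ and $h_n \colon \Z[S^{n+1}] \to \Z[S^{n+2}]$ by $(s_0, \ldots, s_n) \mapsto (e, s_0, \ldots, s_n)$. The standard cone formula shows that $h$ is a $\Z$-linear chain contraction of the augmented complex $\Z[S^{*+1}] \to \Z$, and tensoring with $\linfz X$ over $\Z$ transports this contraction verbatim to the complex above. In short, the whole argument is a routine consequence of Lemma~\ref{lem:lfsz} combined with standard homological algebra over the PID $\Z$; there is no substantial obstacle here, only the bookkeeping of pulling back the exactness through $\zeta_S$.
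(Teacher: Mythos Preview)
Your proposal is correct and follows essentially the same route as the paper: reduce to exactness of $\linfz X \otimes_\Z \Z[S^{*+1}] \to \linfz X$ via the isomorphism of Lemma~\ref{lem:lfsz}, and verify that exactness by a flatness argument. The only cosmetic difference is that the paper observes directly that $\linfz X$ is torsionfree and hence $\Z$-flat, so tensoring preserves exactness, whereas you phrase the same fact via $\mathrm{Tor}^\Z_*(\Z,\linfz X)$ or, alternatively, via an explicit cone contraction.
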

\begin{proof}
  The chain complex $\Z[S^{*+1}]$ with augmentation $\varepsilon_S \colon \Z[S^{*+1}] \to \Z$ is 
  a $\Z\Gamma$-resolution of~$\Z$. The $\Z$-module $\linfz X$ is torsionfree, thus flat. So 
  $\linfz X \otimes_\Z \Z[S^{*+1}]\to \linfz X$ is exact. With the diagonal action on 
  $\linfz X \otimes_\Z \Z[S^{*+1}]$ it becomes a $\Z[\Gamma]$-resolution of $\linfz X$. 
  Lemma~\ref{lem:lfsz} concludes the proof. 
\end{proof}

\subsection{The chain map associated with a Borel partition}

Borel partitions lead to corresponding equivariant partition
chain maps.

\begin{defi}\label{def:equivcover}
  Let $\Gamma$ be a countable group, let $E$ be a countable free $\Gamma$-set,  
  and let $\Gamma \actson (X,\mu)$ be a free standard
  $\Gamma$-space.
  A \emph{$\Gamma$-equivariant $\mu$-partition of~$X \times E$} is
  a family~$P:= (W_s)_{s \in S}$ of Borel subsets of~$X \times E$ with
  the following properties:
  \begin{itemize}
  \item The sets~$(W_s)_{s \in S}$ are pairwise disjoint. 
  \item The union~$\bigcup_{s \in S} W_s$ is $\mu\otimes\delta$-conull
    in~$X \times E$, where $\delta$ denotes the counting measure on~$E$.
  \item The set~$S$ is equipped with a free $\Gamma$-action and for
    all~$\gamma \in \Gamma$, $s\in S$, we have
    \[ \gamma \cdot W_s = W_{\gamma \cdot s}.
    \]
  \end{itemize}
  We say that $P$ is \emph{of finite type} if the following holds:
  For each~$e \in E$, there are only finitely many~$s \in S$
  with~$W_s \cap X \times \{e\} \neq \emptyset$.
\end{defi}
  
\begin{rem}[induced partitions]\label{rem:fibreproduct}
  Let $\Gamma$ be a countable group, let $E$ be a countable
  free $\Gamma$-set, let $\Gamma \actson (X,\mu)$ be a 
  free standard $\Gamma$-space, and let $P = (W_s)_{s \in S}$ be a $\Gamma$-equivariant
  $\mu$-partition of~$X \times E$. 
  
  If $n \in \N$ and $s_0,\dots, s_n \in S$, we write
  \begin{align*}
    W_{(s_0,\dots, s_n)}
    & := W_{s_0} \times_X \dots \times_X W_{s_n}
    \\
    & := \bigl\{ (x, e_0, \dots, e_n)
    \bigm| \fa{r \in \{0,\dots,n\}} (x,e_r) \in W_{s_r}
    \bigr\}
    \subset X \times E^{n+1}.
  \end{align*}

  Then, for each~$n \in
  \N$, the family~$(W_s)_{s \in S^{n+1}}$ is a $\Gamma$-equivariant
  $\mu$-partition of~$X \times E^{n+1}$. If $P$ is of finite type,
  then also $(W_s)_{s \in S^{n+1}}$ is so.
\end{rem}

\begin{prop}\label{prop:borelpchmap}
  Let $\Gamma$ be a countable group, let $E$ be a free $\Gamma$-space,
  let $\Gamma \actson (X,\mu)$ be a free standard
  $\Gamma$-space, and let $P:= (W_s)_{s \in S}$ be a $\Gamma$-equivariant
  $\mu$-partition of~$X \times E$ of finite type.
  Then
  \begin{align*}
    \Phi^P \colon
    \Z[E^{*+1}]
    & \longrightarrow \lfsz{X \times S^{*+1}}
    \\
    (e_0,\dots,e_n)
    & \longmapsto
    \bigl((x,s_0,\dots,s_n) \mapsto \chi_{W_{(s_0,\dots,s_n)}}(x,e_0,\dots,e_n) \bigr) 
  \end{align*}
  is a $\Z\Gamma$-chain map that extends $\Z \hookrightarrow \linfz X$.
\end{prop}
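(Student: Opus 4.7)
The plan is to verify directly, from the definitions, the four properties required of $\Phi^P$: that each $\Phi^P(e_0,\dots,e_n)$ lies in $\lfsz{X \times S^{n+1}}$, that $\Phi^P$ is $\Z\Gamma$-equivariant, that it commutes with the differentials, and that $\widetilde\varepsilon_S \circ \Phi^P_0$ extends the inclusion $\Z \hookrightarrow \linfz X$.

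The first, second, and fourth items are essentially direct unpackings. For well-definedness, the values of $\chi_{W_{(s_0,\dots,s_n)}}$ lie in $\{0,1\}$, so the function is essentially bounded; and for fixed $(e_0,\dots,e_n)$, the finite type hypothesis supplies, for each $i$, a finite subset $F_i \subset S$ with $W_s \cap (X \times \{e_i\}) = \emptyset$ whenever $s \notin F_i$, so $\Phi^P(e_0,\dots,e_n)$ is supported in $X \times (F_0 \times \dots \times F_n)$. For equivariance I would use $\gamma \cdot W_{(s_0,\dots,s_n)} = W_{(\gamma \cdot s_0,\dots,\gamma \cdot s_n)}$ (immediate from Definition~\ref{def:equivcover} and Remark~\ref{rem:fibreproduct}) to derive $\chi_{W_{(\gamma \cdot s_0,\dots,\gamma \cdot s_n)}}(x, \gamma \cdot e_0,\dots,\gamma \cdot e_n) = \chi_{W_{(s_0,\dots,s_n)}}(\gamma^{-1} \cdot x, e_0,\dots,e_n)$, which is exactly the equality matching $\Phi^P(\gamma \cdot (e_0,\dots,e_n))$ with $\gamma \cdot \Phi^P(e_0,\dots,e_n)$ under the diagonal $\Gamma$-action on $\lfsz{X \times S^{n+1}}$. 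For the extension property, evaluating $\widetilde\varepsilon_S \circ \Phi^P_0$ at $e_0 \in E$ and $x \in X$ yields $\sum_{s \in S} \chi_{W_s}(x,e_0)$, which equals $1$ for $\mu$-almost every $x$ since the $(W_s)_{s \in S}$ are pairwise disjoint with conull union in $X \times E$; this matches $\varepsilon_E(e_0) = 1$ under the inclusion $\Z \hookrightarrow \linfz X$.

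The substantive step is commutation with $\partial$. I would expand, for fixed $(x,s_0,\dots,s_{n-1})$,
\[ \partial_n \Phi^P(e_0,\dots,e_n)(x,s_0,\dots,s_{n-1}) = \sum_{j=0}^n (-1)^j \sum_{t \in S} \chi_{W_{(s_0,\dots,s_{j-1},t,s_j,\dots,s_{n-1})}}(x,e_0,\dots,e_n). \]
Membership in $W_{(s_0,\dots,s_{j-1},t,s_j,\dots,s_{n-1})}$ decouples coordinate-wise (Remark~\ref{rem:fibreproduct}): the factors at positions $i \neq j$ impose the $t$-independent conditions $(x,e_i) \in W_{s_i}$ for $i<j$ and $(x,e_i) \in W_{s_{i-1}}$ for $i>j$, while the position-$j$ factor is $\chi_{W_t}(x,e_j)$, whose sum over $t \in S$ equals $1$ for $\mu$-a.e.\ $x$ by disjointness and conullness of the partition. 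After re-indexing the surviving conditions by $r \mapsto r+1$ past position $j$, they are exactly those defining $\chi_{W_{(s_0,\dots,s_{n-1})}}(x, e_0,\dots,\widehat{e_j},\dots,e_n)$, so the $j$-th summand equals $\Phi^P(e_0,\dots,\widehat{e_j},\dots,e_n)(x,s_0,\dots,s_{n-1})$, and the signed sum over $j$ produces $\Phi^P(\partial_n(e_0,\dots,e_n))$. The only real obstacle is keeping the index bookkeeping straight in this last computation; everything else is a direct application of the partition axioms of Definition~\ref{def:equivcover} together with the finite type assumption.
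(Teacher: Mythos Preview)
Your proposal is correct and follows essentially the same approach as the paper's proof: both verify directly that $\Phi^P$ lands in the right module (using boundedness by~$1$ and the finite type hypothesis), check $\Gamma$-equivariance via the relation $\gamma \cdot W_s = W_{\gamma \cdot s}$, compute the boundary by collapsing the inner sum~$\sum_{t \in S} \chi_{W_t}(x,e_j) = 1$ to reduce to the face map, and verify the augmentation condition via the partition property. The only cosmetic difference is that the paper phrases the boundary step as $\chi_{\bigcup_{t} W_{(\ldots,t,\ldots)}}$ rather than summing the indicator of the $t$-slot separately, but this is the same computation.
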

\begin{proof}
  Let $n \in \N$, let $e \in E^{n+1}$, and let~$f := \Phi^P(e)$.

  We first show that $f$ indeed lies in~$\lfsz{X \times S^{*+1}}$.
  Clearly, $f$ is measurable and essentially bounded by~$1$
  because $(W_s)_{s \in S^{n+1}}$ is a $\mu$-partition of~$X \times E^{n+1}$
  (Remark~\ref{rem:fibreproduct}). Moreover, $f$ has
  finite support as $P$ is of finite type.

  For $\Gamma$-equivariance, we compute for all~$(x,s) \in X \times
  S^{n+1}$ that (using that $P$ is $\Gamma$-equivariant)
  \begin{align*}
    \bigl(\gamma \cdot (\Phi^P(e))\bigr)(x,s)
    &= \chi_{W_{\gamma^{-1}\cdot s}}(\gamma^{-1}\cdot x, e)
    \\
    & = \chi_{\gamma^{-1} \cdot W_s}(\gamma^{-1}\cdot x, e)
    \\
    & = \chi_{W_s}(x,\gamma \cdot e)
    \\
    & = \bigl(\Phi^P(\gamma\cdot e)\bigr)(x,s).
  \end{align*}

  For the chain map property, we compute for all~$j \in \{0,\dots,n\}$
  and all~$s \in S^n$ that
  (using that $P$ induces a partition on~$X \times E^{n+1}$)
  \begin{align*}
    \sum_{t \in S} \Phi^P(e) (x,s_0,\dots,s_{j-1},t,s_j, \dots, s_{n-1})
    & = \sum_{t \in S} \chi_{W_{(s_0,\dots, s_{j-1},t,s_j, \dots, s_{n-1})}}(x,e)
    \\
    & = \chi_{\bigcup_{t \in S} W_{(s_0,\dots, s_{j-1},t,s_j, \dots, s_{n-1})}}(x,e)
    \\
    & = \chi_{W_{(s_0,\dots,s_{n-1})}}(x,e_0,\dots,\widehat e_j, \dots,e_n).
  \end{align*}
  Therefore, $\partial \bigl(\Phi^P(e)) = \Phi^P(\partial e)$.

  Finally, we consider degree~$0$: 
  As $\widetilde \varepsilon_S \colon \lfsz {X \times S^{*+1}} \to \linfz X$
  is a $\Z\Gamma$-resolution (Corollary~\ref{cor:lfszres}), the notion
  of ``extends'' from Definition~\ref{def:extends} is available. 
  To see that the extension condition is satisfied, it suffices to 
  show that~$\widetilde \varepsilon_S \circ \Phi^P(e_0)$ is the constant
  function~$1$ 
  for each~$e_0 \in E^{0+1}$: By construction,
  \begin{align*}
    \widetilde \varepsilon_S \circ \Phi^P(e_0)
    & =
    \sum_{s \in S} \bigl(x \mapsto \chi_{W_s}(x,e_0)\bigr).
  \end{align*}
  Because $(W_s)_{s\in S}$ is a $\mu$-partition of~$X \times E$,
  this sum equals~$1$.
\end{proof}

\subsection{The Rokhlin chain map}

The Rokhlin chain maps are chain maps obtained by applying the
construction of the previous section to Rokhlin partitions.

We use the following version of the Ornstein--Weiss Rokhlin lemma,
which allows us to perform divisions on the level of Borel sets:

\begin{thm}[Rokhlin lemma~\protect{\cite[Theorem~3.6]{rokhlin}}]
  \label{thm:rokhlinlemma}
  Let $\Gamma$ be a countable amenable group, let $\Gamma \actson
  (X,\mu)$ be a free standard $\Gamma$-action,
  let $F \subset \Gamma$ be a finite set, and let $\delta \in
  \R_{>0}$.

  Then, there exists a $\mu$-conull $\Gamma$-invariant Borel set~$X' \subset X$,
  a finite set~$J$, a family~$(A_j)_{j \in J}$ of Borel subsets of~$X'$,
  and a family~$(T_j)_{j \in J}$ of $(F,\delta)$-invariant non-empty finite subsets
  of~$\Gamma$ such that $(T_j \cdot x)_{j \in J, x \in A_j}$
  partitions~$X'$.
\end{thm}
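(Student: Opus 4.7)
The plan is to derive the exact partition of a conull invariant set from the Ornstein--Weiss quasi-tiling theorem via a bootstrap argument that absorbs successive residues using the same finite family of tile shapes.

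First, I would invoke the Ornstein--Weiss quasi-tiling theorem for countable amenable groups: given $F \subset \Gamma$ finite and $\delta \in \R_{>0}$, there exist finitely many $(F,\delta)$-invariant nonempty finite sets $T_1,\dots,T_k \subset \Gamma$ with the following tiling property. For any free measure-preserving action $\Gamma \actson (X,\mu)$ and any $\varepsilon > 0$, one can choose Borel sets $B_1,\dots,B_k \subset X$ such that the translates $\{t\cdot B_j : t \in T_j,\ j \in \{1,\dots,k\}\}$ are pairwise disjoint (here freeness is used so that $t \cdot x$ are distinct for $t \in T_j$) and cover $X$ up to $\mu$-measure at most $\varepsilon$. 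This is the combinatorial heart of the matter and I would treat it as a black box.

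Second, I would turn the $\varepsilon$-approximate tiling into an exact partition of a conull set by iteration. Set $X_0 := X$. At stage $n \in \N$, apply the quasi-tiling theorem to the restriction of the action to the $\Gamma$-invariant Borel set $X_n$ (with the base sets constrained to lie in a Borel transversal of $X_n$) with error parameter $\varepsilon_n$, producing Borel base sets $B_1^{(n)}, \dots, B_k^{(n)} \subset X_n$ using the \emph{same} finitely many shapes $T_1,\dots,T_k$. Let $X_{n+1}$ be the $\Gamma$-saturation of the uncovered part; by construction $\mu(X_{n+1}) \leq \varepsilon_n$. Choosing $\sum_n \varepsilon_n < \infty$ and summing, $\bigcap_n X_n$ has measure zero, so $X' := X \setminus \bigcap_n X_n$ is $\mu$-conull and $\Gamma$-invariant. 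Then setting
\[
  A_j := \bigsqcup_{n \in \N} B_j^{(n)} \qquad (j \in \{1,\dots,k\})
\]
collects the countably many base sets indexed by the \emph{fixed} shape $T_j$ into a single Borel set; the resulting finite family $(A_j, T_j)_{j \in J}$ with $J := \{1,\dots,k\}$ yields the desired partition $(T_j \cdot x)_{j\in J,\, x \in A_j}$ of $X'$.

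Third, I would verify the partition property cleanly: disjointness within a single stage follows from the quasi-tiling output, and disjointness across stages is automatic because $B_j^{(n)} \subset X_n$ while the stage-$m$ tiles (for $m<n$) lie outside $X_n$ by construction of $X_n$. Measurability of $A_j$ is a countable disjoint union of Borel sets, hence Borel.

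The main obstacle is the black-boxed first step: the quasi-tiling theorem, whose proof requires the delicate Ornstein--Weiss machinery of $\varepsilon$-disjoint, $\varepsilon$-covering systems of Følner sets in amenable groups. In the iteration the only subtlety is ensuring that at each stage the base sets can be chosen Borel-measurably inside the residual invariant set $X_n$; this is handled by selecting Borel transversals and applying the quasi-tiling theorem to the induced equivalence relation on $X_n$.
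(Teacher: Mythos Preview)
The paper does not supply a proof of this statement; it is quoted from an external reference and used as a black box, so there is no in-paper argument to compare against.

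Your sketch does follow the natural strategy of upgrading the Ornstein--Weiss approximate quasi-tiling to an exact partition by iteration, and the key observation that the tile shapes~$T_1,\dots,T_k$ can be chosen depending only on~$(\Gamma,F,\delta)$ and not on the particular action is exactly what would keep the final index set~$J$ finite. However, there is a genuine gap in the inductive step: you set~$X_{n+1}$ to be the $\Gamma$-saturation of the uncovered residue and assert~$\mu(X_{n+1}) \leq \varepsilon_n$, but the $\Gamma$-saturation of a small set need not be small --- for an ergodic action, every non-null set saturates to a conull set --- so the measures~$\mu(X_n)$ need not tend to~$0$ and the iteration does not converge. (The same issue also breaks your cross-stage disjointness claim: once you saturate, stage-$m$ towers can meet~$X_n$.) If instead you do not saturate and work with the non-invariant residue~$R_n$ directly, you lose the hypothesis needed to re-apply the quasi-tiling theorem as you stated it, namely a free action on an invariant probability space. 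Closing this gap requires the relative form of the Ornstein--Weiss machinery, which produces disjoint towers lying inside a prescribed Borel subset and covering a definite proportion of it; that refinement is precisely the additional content of the cited exact Rokhlin lemma over the classical~$\varepsilon$-approximate version, and is not something the naive bootstrap recovers.
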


Here, a non-empty finite subset~$T \subset \Gamma$ is
\emph{$(F,\delta)$-invariant} if
\[ \frac{|F \cdot T \symmdiff T|}{|T|} \leq \delta.
\]

\begin{setup}\label{setup:rokhlin}
  Let $\Gamma$ be a countable group, let $(\Gamma_i)_{i \in I}$ be a
  non-empty finite family of amenable subgroups of~$\Gamma$, and let $F
  \subset \Gamma$ be a finite set.

  Moreover, let $\Gamma \actson (X,\mu)$ be an essentially free 
  standard $\Gamma$-space.

  Let $\delta \in \R_{>0}$; for each~$i \in I$, let $((A_{i,j})_{j \in
    J_i}, (T_{i,j})_{j \in J_i})$ be a $(F\cap \Gamma_i,\delta)$-Rokhlin
  partition of a $\Gamma_i$-invariant $\mu$-conull subset~$X'_i$ of~$X$
  with respect to~$F \cap \Gamma_i$, as provided by
  Theorem~\ref{thm:rokhlinlemma}. Without loss of generality 
  we also assume that each~$T_{i, j}$ contains the neutral element of~$\Gamma$.

  Let $E := \Gamma \times I$, with the $\Gamma$-translation action
  on the first factor.
\end{setup}

\begin{rem}\label{rem:rokhlinmeasure}
  For later use, we record that in the situation of Setup~\ref{setup:rokhlin},
  we have
  \[ \mu(A_{i,j}) = \frac{\mu(T_{i,j} \cdot A_{i,j})}{|T_{i,j}|}
  \]
  for all~$i \in I$ and all~$j \in J_i$, by the partition condition.
  
  Moreover, given~$i \in I$ and $j_0, j_1 \in J_i$ with~$j_0 \neq j_1$,
  we have~$A_{i,j_0} \cap A_{i,j_1} = \emptyset$ because $T_{i,j_0}$
  and $T_{i,j_1}$ contain the neutral element.
\end{rem}
  
\begin{defi}[the Rokhlin chain map]\label{rem:equivrokhlin}
  In the situation of Setup~\ref{setup:rokhlin}, the Rokhlin partition
  gives a $\Gamma$-equivariant $\mu$-partition of~$X \times \Gamma \times I$ which has finite type,
  namely
  \[ P^\delta := 
    \bigl((\gamma \cdot A_{i,j})
    \times (\gamma \cdot T_{i,j}{}^{-1})
    \times \{i\}\bigr)_{\gamma \in \Gamma, i \in I, j \in J_i}. 
  \]
  We write~$S := \{ (\gamma,i,j) \bigm| \gamma \in \Gamma, i \in I, j
  \in J_i \}$, equipped with the $\Gamma$-translation action on the
  first factor. For~$s = (\gamma,i,j) \in S$, we set
  \[ W_s := (\gamma \cdot A_{i,j}) \times (\gamma \cdot T_{i,j}^{-1}) \times \{i\}.
  \]
  Finally, we abbreviate~$\Phi^\delta := \Phi^{P^\delta} \colon
  \Z[E^{*+1}] \longrightarrow \lfsz {X \times S^{*+1}}$
  (Proposition~\ref{prop:borelpchmap}). 
\end{defi}

\begin{defi}[colouring condition]
  In the situation of Setup~\ref{setup:rokhlin}, we say that
  a tuple~$z = (\lambda_r,t_r)_{r \in [n]} \in E^{n+1}$ satisfies
  the \emph{$F$-colouring condition} if there exists an~$i \in I$
  and $k,l \in [n]$ with~$k < l$ such that $t_k = t_l = i$ and
  \[ \lambda_k^{-1} \cdot \lambda_l \in F \cap \Gamma_i.
  \]
  Here, $[n] := \{0,\dots,n\}$.
\end{defi}

In the situation of Setup~\ref{setup:rokhlin}, we always consider $S$
to be the set described in Definition~\ref{rem:equivrokhlin}.

\begin{thm}[properties of the Rokhlin chain map]\label{thm:rokhlinmap}
  In the situation of Setup~\ref{setup:rokhlin}, 
  the Rokhlin chain map~$\Phi^\delta \colon \Z[E^{*+1}] \longrightarrow
  \lfsz {X \times S^{*+1}}$ has the following properties: 
  \begin{enumerate}
  \item The map~$\Phi^\delta$ is a $\Z\Gamma$-chain map that
    extends~$\Z \hookrightarrow \linfz X$.
  \item Let $z \in \Z[E^{*+1}]$ be a chain that satisfies the
    $F$-colouring condition. Then $\essn {\Phi^\delta(z)} \leq \delta \cdot |z|_1$;
    in particular, 
    \[ \lim_{\delta \to 0} \;
       \essn[big] {\Phi^\delta(z)} = 0.
    \]
  \end{enumerate}
\end{thm}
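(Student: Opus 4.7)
The plan is to split the two claims: part (1) will follow from Proposition~\ref{prop:borelpchmap} once one verifies that $P^\delta$ is a $\Gamma$-equivariant $\mu$-partition of finite type, whereas part (2) is the heart of the matter and reduces, via linearity and subadditivity of the essential norm, to a single simplex satisfying the colouring condition; a change of variables will then convert the problem into a counting estimate controlled by the $(F \cap \Gamma_i, \delta)$-invariance of the Rokhlin shapes~$T_{i,j}$.

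For part (1), I would verify the three partition axioms of Definition~\ref{def:equivcover} directly for~$P^\delta$. Equivariance follows from $\gamma' \cdot W_{(\gamma, i, j)} = W_{(\gamma' \gamma, i, j)}$. Disjointness reduces, for each fixed $(\lambda, i) \in E$, to the uniqueness of the Rokhlin decomposition in~$X'_i$, since $(x, \lambda, i) \in W_{(\gamma, i, j)}$ forces $\lambda^{-1} x = (\lambda^{-1}\gamma) \cdot y$ with $\lambda^{-1}\gamma \in T_{i, j}$ and $y \in A_{i, j}$. The union is $\mu$-conull because its $(\lambda, i)$-fibre is $\lambda \cdot X'_i$, which is conull in~$X$. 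Finite type is immediate: only finitely many $(\gamma, i, j)$ can intersect a given slice, since $\gamma$ must lie in $\lambda T_{i, j}$ with $j \in J_i$. Proposition~\ref{prop:borelpchmap} then supplies both the chain map property and the extension condition.

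For part (2), by linearity and subadditivity of $\essn{\args}$ it is enough to consider a single simplex $\sigma = ((\lambda_0, t_0), \ldots, (\lambda_n, t_n))$ satisfying the colouring condition at some $k < l$ with $t_k = t_l = i$ and $\gamma := \lambda_k^{-1}\lambda_l \in F \cap \Gamma_i$. Because $(W_s)_{s \in S^{n+1}}$ partitions $X \times E^{n+1}$, for almost every $x$ there is a unique index $s(x) \in S^{n+1}$ with $\Phi^\delta(\sigma)(x, s(x)) = 1$, so
\[
\essn{\Phi^\delta(\sigma)} \leq \mu\bigl(\{x \in X : s_k(x) \neq s_l(x)\}\bigr).
\]
I would identify each $s_r(x) = (\lambda_r \tau_r, i, j_r)$ with the Rokhlin decomposition $\lambda_r^{-1} x = \tau_r \cdot y_r$ in~$X'_i$, where $\tau_r \in T_{i, j_r}$ and $y_r \in A_{i, j_r}$ are uniquely determined.

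Setting $u := \lambda_k^{-1} x$ (so that $\lambda_l^{-1} x = \gamma^{-1} u$), applying uniqueness of the Rokhlin decomposition to the candidate $\gamma^{-1} u = (\gamma^{-1}\tau(u)) \cdot y(u)$ shows that $s_k(x) = s_l(x)$ exactly when $\gamma^{-1}\tau(u) \in T_{i, j(u)}$. Hence the bad set is $\{u : \tau(u) \notin \gamma \cdot T_{i, j(u)}\}$, and decomposing by $j$ via $\{u : j(u) = j\} = \bigsqcup_{\tau \in T_{i, j}} \tau \cdot A_{i, j}$ expresses its $\mu$-measure as $\sum_{j \in J_i} |T_{i, j} \setminus \gamma T_{i, j}| \cdot \mu(A_{i, j})$. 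Since $\gamma \in F \cap \Gamma_i$ and $T_{i, j}$ is $(F \cap \Gamma_i, \delta)$-invariant, $|T_{i, j} \setminus \gamma T_{i, j}| = |\gamma T_{i, j} \setminus T_{i, j}| \leq \delta \cdot |T_{i, j}|$, and Remark~\ref{rem:rokhlinmeasure} turns the sum into $\delta \cdot \mu(X'_i) \leq \delta = \delta \cdot |\sigma|_1$. The limit statement then follows immediately. The main obstacle I anticipate is the clean identification of $s_r(x)$ with the Rokhlin decomposition of $\lambda_r^{-1} x$ and the resulting characterization of $s_k(x) = s_l(x)$; once this correspondence is established, the almost-invariance bound concludes the argument.
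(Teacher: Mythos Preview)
Your proposal is correct and follows the same overall strategy as the paper: part~(1) is deduced from Proposition~\ref{prop:borelpchmap}, and part~(2) reduces to a single simplex and is controlled by the $(F\cap\Gamma_i,\delta)$-invariance of the Rokhlin shapes together with Remark~\ref{rem:rokhlinmeasure}.

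The execution of part~(2) differs in presentation. The paper (Lemmas~\ref{lem:Sz} and~\ref{lem:lastestimate}) writes the essential norm as an explicit sum $\sum_{s \in \esst{S}{n+1}} \int_X \chi_{W_s}(x,z)\,d\mu(x)$, restricts to the finite subset~$\esst{S_z}{n+1}$, and then simplifies the resulting sum of measures of intersections~$\bigcap_r \gamma_r A_{t_r,j_r}$ inductively over the coordinates~$r \geq 2$ using the disjointness of the Rokhlin pieces, eventually reducing to the two distinguished coordinates. Your route is more direct: you exploit that for almost every~$x$ the function~$\Phi^\delta(\sigma)(x,\cdot)$ is supported on a single tuple~$s(x)$, so the essential norm is bounded by~$\mu(\{x : s_k(x)\neq s_l(x)\})$, and you then characterise this set via uniqueness of the Rokhlin decomposition of~$\lambda_k^{-1}x$ and~$\lambda_l^{-1}x$. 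This bypasses the explicit sum manipulation and the inductive reduction over the other coordinates, yielding the same bound~$\sum_{j} |T_{i,j}\setminus \gamma T_{i,j}|\cdot\mu(A_{i,j}) \leq \delta$ more transparently. The paper's approach is more hands-on and makes every cancellation visible; yours is conceptually cleaner but requires the reader to trust the equivalence ``$s_k(x)=s_l(x) \Longleftrightarrow \gamma^{-1}\tau(u)\in T_{i,j(u)}$'', which you correctly identify as the step needing care.
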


\begin{proof}
  \emph{Ad~1.} This is a general property for chain maps
  associated with Borel partitions (Proposition~\ref{prop:borelpchmap}).

  \emph{Ad~2.} The proof is given in Section~\ref{subsec:rokhlinnorm}.
\end{proof}

\subsection{The norm estimate for the Rokhlin chain map}\label{subsec:rokhlinnorm}

We now prove the second part of Theorem~\ref{thm:rokhlinmap}.
In view of the triangle inequality, it suffices to prove that
\[ 
   \essn[big] {\Phi^\delta(z)}\leq \delta 
\]
holds for all tuples~$z \in E^{*+1}$ that satisfy the $F$-colouring
condition. Since all the involved terms are sufficiently $\Gamma$-equivariant
and symmetric in the coordinates of~$z = (\lambda_r,t_r)_{r \in [n]}$,
we may assume without loss of generality that~$\lambda_0 = 1$ and
$\lambda_1 \in F \cap \Gamma_i$, where $i = t_0 = t_1$.

\begin{setup}\label{setup:rokhlinnorm}
  We assume Setup~\ref{setup:rokhlin}. In addition, we let $n \in \N$
  and we let 
  \[ z = \bigl( (\lambda_0, t_0), \dots, (\lambda_n,t_n)\bigr) \in E^{n+1}  
  \]
  be a tuple that satisfies the $F$-colouring condition in the first
  two coordinates. More specifically,
  let $\lambda_0  =1$, and let $i \in I$ with $i = t_0 = t_1$ and 
  \[ \lambda_1 \in F \cap \Gamma_i.
  \]
  Furthermore, we set
  \[ \esst{S_z}{n+1}
  :=
  \bigl\{ (\gamma_r,i_r,j_r)_{r \in [n]} \in \esst S {n+1}
  \bigm|
  \fa{r \in [n]} i_r = t_r,\ 
  \fa{r \in [n]} \gamma_r \in \lambda_r \cdot T_{t_r,j_r}
  \bigr\}.
  \]
\end{setup}

By definition, 
\[ \essn[big]{\Phi^\delta(z)}
\leq \sum_{s \in \esst S {n+1}} \int_X \chi_{W_s}(x,z) \; d\mu(x).
\]

As a first reduction, we show that only summands in~$\esst{S_z}{n+1}$
contribute to this sum:

\begin{lem}\label{lem:Sz}
  In the situation of Setup~\ref{setup:rokhlinnorm}, 
  let $s \in \esst S{n+1} \setminus \esst{S_z}{n+1}$.
  Then
  \[ \int_X \chi_{W_s}(x,z) \; d\mu(x) = 0.
  \]
\end{lem}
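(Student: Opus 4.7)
The plan is simply to unwind the definitions of the fibre product $W_s$ and of the partition piece $W_{(\gamma,i,j)}$ and observe that non-membership of $s$ in $\esst{S_z}{n+1}$ already forces the integrand $\chi_{W_s}(\args, z)$ to vanish identically on $X$, so the integral is trivially zero.

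Concretely, writing $s = ((\gamma_r, i_r, j_r))_{r \in [n]}$, by the fibre-product description in Remark~\ref{rem:fibreproduct} we have $\chi_{W_s}(x, z) = 1$ precisely when, for every $r \in [n]$, the triple $(x, \lambda_r, t_r)$ lies in
\[ W_{(\gamma_r, i_r, j_r)} = (\gamma_r \cdot A_{i_r, j_r}) \times (\gamma_r \cdot T_{i_r, j_r}^{-1}) \times \{i_r\}. \]
This decomposes into three conditions for each $r$: first $t_r = i_r$; second $\lambda_r \in \gamma_r \cdot T_{i_r, j_r}^{-1}$, which after left-translating rewrites as $\gamma_r \in \lambda_r \cdot T_{i_r, j_r}$ and, once the first condition is known, as $\gamma_r \in \lambda_r \cdot T_{t_r, j_r}$; and third $x \in \gamma_r \cdot A_{i_r, j_r}$.

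The first two conditions are purely combinatorial and do not involve $x$. If $s \notin \esst{S_z}{n+1}$, then by the very definition of $\esst{S_z}{n+1}$ at least one of them must fail for some index $r$, and hence $\chi_{W_s}(x, z) = 0$ for every $x \in X$. Integrating gives the claim.

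I do not anticipate any real obstacle here: this lemma is a direct bookkeeping step, and the only mildly delicate point is translating the membership $\lambda_r \in \gamma_r \cdot T_{i_r, j_r}^{-1}$ into the form $\gamma_r \in \lambda_r \cdot T_{t_r, j_r}$ used in the definition of $\esst{S_z}{n+1}$. The genuine work lies in the next step, where the Rokhlin almost-invariance of the $T_{i,j}$ and the $F$-colouring hypothesis must be combined to bound the remaining sum over $\esst{S_z}{n+1}$ by $\delta$.
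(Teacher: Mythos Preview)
Your argument is correct and essentially identical to the paper's own proof: both unwind the definition of~$W_s$ as a fibre product, split the membership condition $(x,\lambda_r,t_r)\in W_{(\gamma_r,i_r,j_r)}$ into the $I$-coordinate condition~$i_r=t_r$, the $\Gamma$-coordinate condition~$\gamma_r\in\lambda_r\cdot T_{t_r,j_r}$, and the $X$-coordinate condition, and observe that the failure of one of the first two (which is exactly what $s\notin\esst{S_z}{n+1}$ means) makes the integrand vanish identically in~$x$. The paper separates the two cases explicitly, whereas you handle them uniformly, but the content is the same.
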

\begin{proof}
  We write~$s = (\gamma_r,i_r,j_r)_{r \in [n]}$
  and compute 
  \begin{align*}
    \int_X \chi_{W_s}(x,z) \; d\mu(x)
    & =
    \mu\bigl(\bigl\{ x \in X
    \bigm| (x,z) \in W_s
    \bigr\}\bigr)
    \\
    & = 
    \mu\bigl(\bigl\{ x \in X
    \bigm| \fa{r\in[n]} (x,\lambda_r,t_r) \in W_{\gamma_r,i_r,j_r}
    \bigr\}\bigr).
  \end{align*}
  As $s \not\in \esst{S_z}{n+1}$, we can distinguish two cases:
  \begin{itemize}
  \item 
    there exists an~$\overline{r} \in [n]$
    with $i_{\overline{r}} \neq t_{\overline{r}}$;
  \item
    for all~$r \in [n]$, we have~$i_r = t_r$ but 
    there exists an~$R \in [n]$
    with~$\gamma_{R} \not \in \lambda_{R} \cdot T_{t_{R},j_{R}}$.
  \end{itemize}
  
  In the first case, $(x,\lambda_{\overline{r}},t_{\overline{r}}) \not\in W_{\gamma_{\overline{r}},i_{\overline{r}},j_{\overline{r}}}$
  in view of the $I$-coordinate,
  and so the set in question is empty.

  In the second case, we obtain
  \begin{align*}
    \int_X \chi_{W_s}(x,z) \; d\mu(x)
    & =
    \mu\bigl(\bigl\{ x \in X
    \bigm|
    \fa{r \in [n]} (x,\lambda_r) \in \gamma_r \cdot A_{t_r,j_r} \times \gamma_r \cdot T^{-1}_{t_r,j_r}
    \bigr\}\bigr);
  \end{align*}
  this latter set is empty because of the $R$-th term.

  Thus, in both cases, the integral is zero.
\end{proof}

\begin{lem}\label{lem:lastestimate}
  In the situation of Setup~\ref{setup:rokhlinnorm}, we have
  \[ \essn[big]{\Phi^{\delta}(z)} \leq \delta.
  \]
\end{lem}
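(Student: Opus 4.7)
The plan is to convert the essential norm into a single measure on $X$ using the partition structure, use the essentialness condition in coordinates~$0$ and~$1$ (where the $F$-colouring condition is active) to extract a ``shift'' condition, and then bound this by $(F\cap\Gamma_i, \delta)$-invariance of the Rokhlin sets.

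First, by Lemma~\ref{lem:Sz},
\[
  \essn[big]{\Phi^\delta(z)}
  \leq \sum_{s \in \esst{S_z}{n+1}} \int_X \chi_{W_s}(x,z)\, d\mu(x).
\]
Since $(W_s)_{s \in S^{n+1}}$ is a $\mu$-partition of $X \times E^{n+1}$ (Remark~\ref{rem:fibreproduct}, Definition~\ref{rem:equivrokhlin}), for $\mu$-almost every $x \in X$ there exists a unique $s(x) \in S^{n+1}$ with $(x,z) \in W_{s(x)}$, and the right-hand side equals $\mu\bigl(\{x : s(x) \in \esst{S_z}{n+1}\}\bigr)$. Next, I would analyse~$s(x)$ in coordinates~$0,1$ only: using $t_0 = t_1 = i$ and $\lambda_0 = 1$, one finds that $s(x)_r = (\lambda_r \tau_r, i, j_r)$ with $(j_r, \tau_r)$ the a.s.-unique pair in~$J_i \times T_{i,j_r}$ satisfying $\lambda_r^{-1} x \in \tau_r A_{i, j_r}$. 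Writing $J_i(x) := j_0$ and $T_i(x) := \tau_0$, a short case distinction using the disjointness of the Rokhlin cells shows $s(x)_0 = s(x)_1$ if and only if $T_i(x) \in \lambda_1 \cdot T_{i, J_i(x)}$; hence
\[
  \mu\bigl(\{x : s(x) \in \esst{S_z}{n+1}\}\bigr)
  \leq \mu\bigl(\{x \in X : T_i(x) \notin \lambda_1 \cdot T_{i, J_i(x)}\}\bigr).
\]

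Finally, conditioning on $J_i(x) = j$ and using the measure preservation of the $\Gamma$-action together with Remark~\ref{rem:rokhlinmeasure}, the right-hand side rewrites as $\sum_{j \in J_i} |T_{i,j} \setminus \lambda_1 T_{i,j}| \cdot \mu(A_{i,j})$. Since $\lambda_1 \in F \cap \Gamma_i$ and $|T_{i,j}| = |\lambda_1 T_{i,j}|$, the $(F\cap\Gamma_i,\delta)$-invariance of~$T_{i,j}$ yields $|T_{i,j} \setminus \lambda_1 T_{i,j}| = |\lambda_1 T_{i,j} \setminus T_{i,j}| \leq \delta \cdot |T_{i,j}|$, while $\sum_{j \in J_i} |T_{i,j}| \mu(A_{i,j}) \leq \mu(X) = 1$ by the partition property, giving the bound~$\delta$. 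The main subtlety is the second step: correctly identifying when $s(x)_0 = s(x)_1$ and verifying that this condition factors through the pair $(J_i(x), T_i(x))$ so that the final measure computation reduces to a purely combinatorial Følner-type estimate; coordinates $r \geq 2$ play no role, which matches the fact that the $F$-colouring condition only requires essentialness in one pair of coordinates.
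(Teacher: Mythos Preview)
Your proof is correct and follows the same overall strategy as the paper: reduce to~$\esst{S_z}{n+1}$ via Lemma~\ref{lem:Sz}, discard coordinates~$r\ge 2$, and extract a F{\o}lner-type bound from the $(F\cap\Gamma_i,\delta)$-invariance of the Rokhlin sets. The difference is in the bookkeeping. The paper expands the essential norm as an explicit sum~$\sum_{s} \mu\bigl(\bigcap_r \gamma_r A_{t_r,j_r}\bigr)$, peels off coordinates~$r\ge 2$ inductively using disjointness, and then splits the remaining double sum over~$(j_0,\gamma_0,j_1,\gamma_1)$ into cases. Your pointwise reformulation via the map~$x\mapsto s(x)$ compresses all of this: the sum becomes the single measure~$\mu(\{x:s(x)\in\esst{S_z}{n+1}\})$, the peeling of higher coordinates is replaced by the inclusion~$\{s(x)\text{ essential}\}\subset\{s(x)_0\neq s(x)_1\}$, and the case split is absorbed into the equivalence~$s(x)_0=s(x)_1 \iff T_i(x)\in\lambda_1 T_{i,J_i(x)}$ (whose reverse direction uses exactly the uniqueness in the Rokhlin partition that the paper invokes when discarding the ``second sum''). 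The two arguments are thus the same in substance, but your packaging avoids the explicit index gymnastics.
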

\begin{proof}
  By the reduction in Lemma~\ref{lem:Sz} and the definition of~$\esst{S_z}{n+1}$
  and~$W_s$, we have
  \begin{align*}
    \essn[big]{\Phi^{\delta}(z)}
    & \leq \sum_{s \in \esst{S_z}{n+1}} \int_X \chi_{W_s}(x,z) \;d\mu(x)
    \\
    & = \sum_{(\gamma_r,i_r,j_r)_r \in \esst{S_z}{n+1}}
    \mu \Bigl(\bigcap_{r \in [n]} \gamma_r \cdot A_{t_r,j_r} \Bigr)
    \\
    & = \sum_{(\gamma_r,i_r,j_r)_r \in \esst{S_z}{n+1}}
    \mu \Bigl(\gamma_0 \cdot A_{i,j_0} \cap \gamma_1 \cdot A_{i,j_1}
    \cap \bigcap_{r=2}^n \gamma_r \cdot A_{t_r,j_r} \Bigr). 
  \end{align*}
  By the Rokhlin partition condition, for each~$r \in [n]$, the
  sets in the family~$(\gamma_r \cdot A_{t_r,j_r})_{j_r \in J_{t_r}, \gamma_r \in \lambda_r T_{t_r,j_r}}$
  are pairwise disjoint. Therefore, inductively (over~$r \geq 2$), we can simplify to
  \begin{align*}
    \essn[big]{\Phi^{\delta}(z)}
    & \leq \sum_{j_0 \in J_i} \sum_{\gamma_0 \in T_{i,j_0}}
    \sum_{j_1 \in J_i} \sum_{\gamma_1 \in \lambda_1 \cdot T_{i,j_1} \setminus \{\gamma_0\}}
    \mu( \gamma_0 \cdot A_{i,j_0} \cap \gamma_1 \cdot A_{i,j_1})
    \\
    & \ +
    \sum_{j_0 \in J_i} \sum_{\gamma_0 \in T_{i,j_0}} \sum_{j_1 \in J_i \setminus\{j_0\}}
    \mu( \gamma_0 \cdot A_{i,j_0} \cap \gamma_0 \cdot A_{i,j_1})   
    .
  \end{align*}
  In the first sum, 
  if $\gamma_1 \in T_{i,j_1}$, then the intersection term is empty
  because we have a Rokhlin partition.
  Similarly, in the second sum, only empty sets appear (here we use the fact that
  in our setup $A_{i, j_0}$ and $A_{i, j_1}$ are disjoint; Remark~\ref{rem:rokhlinmeasure}).
  So, we obtain
  \begin{align*}
    \essn[big]{\Phi^{\delta}(z)}
    & \leq \sum_{j_0 \in J_i} \sum_{\gamma_0 \in T_{i,j_0}}
    \sum_{j_1 \in J_i} \sum_{\gamma_1 \in \lambda_1 \cdot T_{i,j_1} \setminus T_{i,j_1}}
    \mu( \gamma_0 \cdot A_{i,j_0} \cap \gamma_1 \cdot A_{i,j_1}).
  \end{align*}
  Again, using the Rokhlin partition, we see that the sets~$\gamma_0 \cdot A_{i,j_0}$
  are all pairwise disjoint for~$j_0 \in J_i$, $\gamma_0 \in T_{i,j_0}$. Thus,
  the sum reduces to 
  \begin{align*}
    \essn[big]{\Phi^{\delta}(z)}
    & \leq 
    \sum_{j_1 \in J_i} \sum_{\gamma_1 \in \lambda_1 \cdot T_{i,j_1} \setminus T_{i,j_1}}
    \mu(\gamma_1 \cdot A_{i,j_1})
    \\
    & =
    \sum_{j_1 \in J_i} \sum_{\gamma_1 \in \lambda_1 \cdot T_{i,j_1} \setminus T_{i,j_1}}
    \mu(A_{i,j_1}).
  \end{align*}
  Finally, we use the $(F\cap \Gamma_i,\delta)$-invariance of~$T_{i,j_1}$
  (and Remark~\ref{rem:rokhlinmeasure}):
  \begin{align*}
    \essn[big]{\Phi^{\delta}(z)}
    & \leq 
    \sum_{j_1 \in J_i}
    \mu(A_{i,j_1}) \cdot |\lambda_1 \cdot T_{i,j_1} \setminus T_{i,j_1}|
    \\
    & \leq \sum_{j_1 \in J_i}
    \frac{\mu(T_{i,j_1} \cdot A_{i,j_1})}{|T_{i,j_1}|}
    \cdot
    \delta \cdot |T_{i,j_1}|
    \\
    & = \delta \cdot \sum_{j_1 \in J_i} \mu(T_{i,j_1} \cdot A_{i,j_1})  
    \\
    & = \delta \cdot 1.
  \end{align*}
  This is the claimed estimate.
\end{proof}

This completes the proof of Theorem~\ref{thm:rokhlinmap}.

\section{Fundamental cycles subordinate to open covers}\label{sec:subdiv}

To apply the Rokhlin chain map to obtain fundamental cycles of small
norm, we subdivide a fundamental cycle such that the resulting
cycle is subordinate to the given open cover. We formulate this
subdivision in the language of equivariant chain maps. 
Moreover, we show how these subdivisions can be integrated into
the norm estimates for simplicial volume.

\begin{setup}\label{setup:amenable:covers:top}
Let $n \in \, \N$. We consider the following situation
\begin{itemize}
\item Let $M$ be a path-connected finite CW-complex 
  with fundamental group $\Gamma := \pi_1(M)$; 
\item Let $\pi \colon \ucov M \to M$ be the universal covering
  map;
\item Let $(U_i)_{i \in \, I}$ be a finite open amenable cover of~$M$
  such that each~$U_i$ is path-connected.  For every~$i \in \, I$, let
  $\Gamma_i \leq \Gamma$ be~$\im(\pi_1(U_i,x_i) \to \pi_1(M,x_i))$
  for some choice of~$x_i \in U_i$ (up to conjugacy, this choice
  will have no effect).
\item Let $E := \Gamma \times I$, equipped with the $\Gamma$-action on
  the first factor.
\end{itemize}
\end{setup}

\subsection{From open covers to small equivariant open covers}

As a first step, we refine the given open cover and pass to
an equivariant setting.

\begin{defi}\label{def:small:equiv:cover}
  In the situation of Setup~\ref{setup:amenable:covers:top}, a \emph{small equivariant
    open cover associated with~$(U_i)_{i \in I}$} is an open
  cover~$(\gamma \cdot K_i)_{(\gamma,i) \in \Gamma \times I}$ of~$\ucov
  M$ with the following properties:
  \begin{itemize}
  \item For all~$i \in I$, the set~$K_i \subset \ucov M$ is relatively compact,
    open, and path-connected.
  \item For every~$i \in I$, the sets~$\gamma \cdot K_i$ with~$\gamma \in \Gamma$
    are pairwise distinct.
  \item For all~$i \in I$, the set~$\Gamma_i \cdot K_i$ is a path-component
    of~$\pi^{-1}(U_i)$.
  \item For all~$i \in I$ and all~$\gamma, \lambda \in \Gamma$,
    we have $\gamma \cdot K_i \cap \lambda \cdot K_i
    \neq \emptyset \Longrightarrow \gamma^{-1} \cdot \lambda \in
    \Gamma_i$.
  \end{itemize}
\end{defi}

\begin{lem}\label{lem:smallcovex}
  In the situation of Setup~\ref{setup:amenable:covers:top}, there exists
  a small equivariant open cover associated with~$(U_i)_{i\in I}$.
\end{lem}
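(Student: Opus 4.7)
The plan is to construct each $K_i$ as a coherent lift of $U_i$ into a chosen path-component of $\pi^{-1}(U_i)$. First, for each $i \in I$, I would fix a basepoint $x_i \in U_i$ and a lift $\tilde x_i \in \pi^{-1}(x_i)$, and let $V_i \subset \widetilde M$ be the path-component of $\pi^{-1}(U_i)$ containing $\tilde x_i$. Standard covering theory then identifies the setwise stabilizer $\{\gamma \in \Gamma : \gamma V_i = V_i\}$ with $\Gamma_i = \im(\pi_1(U_i,x_i) \to \pi_1(M,x_i))$ (selecting this specific representative of the conjugacy class, as allowed by the parenthetical in Setup~\ref{setup:amenable:covers:top}). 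The problem then reduces to producing an open, path-connected, relatively compact subset $K_i \subset V_i$ with $\Gamma_i \cdot K_i = V_i$; the remaining properties will follow almost formally.

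For the construction, I would exploit that $\overline{U_i}$ is compact in the finite CW-complex $M$ and cover $\overline{U_i}$ by finitely many path-connected open sets $N_1,\dots,N_m \subset M$ each of which is evenly covered by $\pi$ and has compact closure. Using path-connectedness of $U_i$, I would order the $N_\ell$ so that successively $N_{\ell+1}$ meets $\bigcup_{j \leq \ell} N_j$ inside $U_i$, and inductively pick sheets $\widetilde N_\ell \subset \pi^{-1}(N_\ell)$ by path-lifting from $\tilde x_i$ in such a way that each $\widetilde N_\ell$ meets $V_i$ and chains coherently to the previously chosen sheets. Then I would set
\[
  K_i \;:=\; \text{the path-component of } \Big(\textstyle\bigcup_\ell \widetilde N_\ell\Big) \cap V_i \text{ containing } \tilde x_i.
\]

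The verification is then largely routine. Relative compactness holds because each $\widetilde N_\ell$ is homeomorphic to the relatively compact $N_\ell$, so the finite union (and hence $K_i$) is relatively compact; openness and path-connectedness are built into the definition. For $\Gamma_i \cdot K_i = V_i$, any $v \in V_i$ projects under $\pi$ into some $N_\ell$, and lifting a path in $U_i$ from $x_i$ to $\pi(v)$ shows $v$ is a $\Gamma_i$-translate of a point in $\widetilde N_\ell \cap V_i \subset K_i$. The implication $\gamma K_i \cap \lambda K_i \neq \emptyset \Rightarrow \gamma^{-1}\lambda \in \Gamma_i$ is immediate from $K_i \subset V_i$ and disjointness of distinct path-components of $\pi^{-1}(U_i)$. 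Distinctness of the translates $(\gamma \cdot K_i)_{\gamma \in \Gamma}$ follows from proper discontinuity: the setwise stabilizer of the relatively compact $K_i$ is a finite subgroup of $\Gamma_i$, which can be rendered trivial by a small modification of $K_i$ (e.g.\ removing small open neighborhoods of the finitely many extra $\Gamma_i$-translates of $\tilde x_i$ that happen to lie in $K_i$). Finally, $(\gamma \cdot K_i)_{(\gamma,i)}$ covers $\widetilde M$ because $(U_i)_i$ covers $M$ and $\pi(K_i) = U_i$.

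The main obstacle is the coherent sheet-selection step: producing a single $K_i$ that is simultaneously path-connected, relatively compact, and genuinely surjects onto $U_i$. The subtlety is that $U_i$ itself need not be compact (only $\overline{U_i}$ is), so some sheets $\widetilde N_\ell$ may stick out of $\pi^{-1}(U_i)$; one must intersect carefully with $V_i$ to preserve both the path-connectedness and the identity $\pi(K_i) = U_i$. Everything else in the verification reduces to standard covering-space bookkeeping.
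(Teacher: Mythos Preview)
Your overall framework coincides with the paper's: select the path-component $V_i$ of $\pi^{-1}(U_i)$ with stabiliser $\Gamma_i$ and build $K_i$ inside it. The construction of $K_i$ itself differs. The paper takes a relatively compact path-connected fundamental domain $D_i \subset V_i$ for the $\Gamma_i$-action and then \emph{enlarges} it to an open path-connected $K_i \supset D_i$ chosen to avoid the discrete set $\Gamma \cdot x_0 \setminus \{x_0\}$ for a fixed interior point~$x_0 \in D_i^\circ$. This immediately gives $\Gamma_i \cdot K_i \supset \Gamma_i \cdot D_i = V_i$, and distinctness follows from $x_0 \in K_i$ while $\gamma \cdot x_0 \notin K_i$ for all $\gamma \neq e$; nothing ever needs to be removed.

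Your lifted-cover construction has a genuine gap precisely at the step you flag. For $v \in V_i$ with $\pi(v) \in N_\ell$, lifting an arbitrary path in $U_i$ from $x_i$ to $\pi(v)$ produces some $\tilde v \in V_i$, but there is no reason $\tilde v$ lands in your chosen sheet $\widetilde N_\ell$; conversely the unique lift $w \in \widetilde N_\ell$ of $\pi(v)$ need not lie in $V_i$, because the chain of sheets connecting $w$ to $\tilde x_i$ may leave $\pi^{-1}(U_i)$ (the $N_\ell$ only cover $\overline{U_i}$, not sit inside $U_i$). A clean repair is to demand additionally that each $N_\ell \cap U_i$ be path-connected (achievable by refining the cover), which forces $\widetilde N_\ell \cap \pi^{-1}(U_i)$ to lie entirely in $V_i$ and lets the inductive chaining run inside $V_i$; without this, the inclusion ``$\widetilde N_\ell \cap V_i \subset K_i$'' you invoke is unjustified. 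Separately, your distinctness modification is not quite right as stated: removing \emph{open} neighbourhoods from $K_i$ destroys openness of $K_i$, while removing closed ones may destroy path-connectedness or the equality $\Gamma_i \cdot K_i = V_i$. The paper's enlarge-while-avoiding-an-orbit manoeuvre handles both issues simultaneously and is the cleaner route.
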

\begin{proof}
  Let $i \in I$. We choose a path-connected component~$V_i$
  of~$\pi^{-1}(U_i)$; we make this choice in such a way
  that the deck transformation action of~$\Gamma$ on~$\widetilde M$
  restricts to an action of~$\Gamma_i$ to~$V_i$.
  The restriction~$\pi|_{V_i} \colon V_i \to U_i$
  is a covering map and we may choose a relatively compact, path-connected
  fundamental domain~$D_i \subset V_i$ for the deck transformation
  action~$\Gamma_i \actson V_i$. 
  We enlarge~$D_i$ to a relatively compact, open, path-connected
  subset~$K_i \subset V_i$ with $D_i \subset K_i$ and
  \[ \fa{\gamma \in \Gamma \setminus \{e\}} \gamma \cdot K_i \neq K_i.
  \]
  Such a set can, for instance, be obtained by selecting a point~$x_0
  \in D_i^\circ$ and then looking at relatively compact, open,
  path-connected neighbourhoods of all points in~$\overline{D_i}$ that
  avoid the discrete set~$\Gamma \cdot x_0 \setminus \{x_0\}$; as
  $D_i$ is relatively compact, finitely many such neighbourhoods
  suffice.
  
  By construction, $K_i$ is relatively compact, open, and
  path-connected.  Moreover, $\gamma \cdot K_i \neq \lambda \cdot K_i$
  for distinct group elements $\gamma, \lambda \in \Gamma$.

  We have~$\Gamma_i \cdot K_i = V_i$ because
  \[ V_i
  = \bigcup_{\gamma \in \Gamma_i} \gamma \cdot D_i
  \subset \bigcup_{\gamma \in \Gamma_i} \gamma \cdot K_i
  \subset \bigcup_{\gamma \in \Gamma_i} \gamma \cdot V_i
  = V_i.
  \]

  The last property is
  automatically satisfied because $\Gamma_i \cdot K_i$
  coincides with the path-component~$V_i$ of~$\pi^{-1}(U_i)$
  and $\Gamma_i$ is~$\im(\pi_1(U_i,x_i) \to \pi(M,x_i))$.
\end{proof}

\subsection{Equivariant subdivision}\label{subsec:eqsubdiv}

In the situation of Setup~\ref{setup:amenable:covers:top}, let $\Gamma$
be torsion-free (e.g., this holds if $M$ is a closed aspherical manifold),
and we let $N$ denote the nerve of a small equivariant open cover~$(\gamma \cdot
K_i)_{(\gamma,i) \in \Gamma \times I}$ associated with~$(U_i)_{i \in
  I}$. Thus, $N$ is a $\Gamma$-simplicial complex and the obvious
simplicial $\Gamma$-action on~$N$ is free on the set of all simplices
(since $\Gamma$ is torsion-free). 

Moreover, we denote the geometric realisation of a simplicial
complex~$X$ by~$|X|$ and we write $C_*^s(X;\Z)$ for the ordered
simplicial chain complex~\cite[Chapter~4.3]{spanier}.

\begin{lem}\label{lem:nervemap}
  In this situation, there exists a continuous
  $\Gamma$-map~$\ucov M \longrightarrow |N|$.
\end{lem}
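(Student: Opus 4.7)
The plan is to build a continuous $\Gamma$-equivariant partition of unity on~$\ucov M$ subordinate to the cover~$(\gamma \cdot K_i)_{(\gamma,i) \in \Gamma \times I}$ and then apply the standard nerve construction: define $f \colon \ucov M \to |N|$ by sending $x$ to the point whose barycentric coordinate at the vertex corresponding to~$(\gamma,i)$ equals $\varphi_{\gamma,i}(x)$. Continuity and $\Gamma$-equivariance of $f$ then follow at once from the corresponding properties of the partition of unity, and the image lies in~$|N|$ because the vertices with nonzero coordinate at any~$x$ correspond exactly to cover members containing~$x$, and thus span a simplex of~$N$.

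To construct the equivariant partition of unity, first I would pick an ordinary partition of unity~$(\psi_i)_{i \in I}$ on~$M$ subordinate to~$(U_i)_{i \in I}$ and pull back along~$\pi$ to obtain $\Gamma$-invariant functions~$\widetilde\psi_i := \psi_i \circ \pi$ on~$\ucov M$ with $\supp \widetilde\psi_i \subset \pi^{-1}(U_i)$ and $\sum_i \widetilde\psi_i \equiv 1$. Then I would distribute each~$\widetilde\psi_i$ among the translates~$(\gamma \cdot K_i)_{\gamma \in \Gamma}$ by means of an auxiliary continuous bump $\rho_i \colon \ucov M \to [0,\infty)$ with $\supp \rho_i \subset K_i$, chosen so that the $\Gamma_i$-translates of the positivity set of~$\rho_i$ cover~$V_i = \Gamma_i \cdot K_i$; such a~$\rho_i$ exists because $K_i$ was built in Lemma~\ref{lem:smallcovex} as an open enlargement of a fundamental domain~$D_i$ for $\Gamma_i \actson V_i$. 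Proper discontinuity of $\Gamma \actson \ucov M$ together with relative compactness of~$K_i$ implies that $(\gamma \cdot K_i)_{(\gamma,i)}$ is locally finite, so
\[ R_i(x) := \sum_{\gamma \in \Gamma} \rho_i(\gamma^{-1} \cdot x) \]
defines a continuous $\Gamma$-invariant function on~$\ucov M$ that is strictly positive on~$\pi^{-1}(U_i) = \Gamma \cdot V_i$.

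With $R_i$ in hand, I would set
\[ \varphi_{\gamma,i}(x) := \widetilde\psi_i(x) \cdot \frac{\rho_i(\gamma^{-1} \cdot x)}{R_i(x)} \]
on~$\pi^{-1}(U_i)$ and $\varphi_{\gamma,i}(x) := 0$ elsewhere. Routine verifications then give $\supp \varphi_{\gamma,i} \subset \gamma \cdot K_i$, the normalisation $\sum_{(\gamma,i)} \varphi_{\gamma,i} \equiv 1$, and the equivariance identity $\varphi_{\lambda\gamma,i}(\lambda \cdot x) = \varphi_{\gamma,i}(x)$ for all $\lambda \in \Gamma$. Plugging the family $(\varphi_{\gamma,i})$ into the nerve map above yields the desired continuous $\Gamma$-map $\ucov M \to |N|$.

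The main technical obstacle is arranging~$\rho_i$ so that $R_i$ is strictly positive throughout~$\pi^{-1}(U_i)$ while its support stays inside~$K_i$; this is precisely why Lemma~\ref{lem:smallcovex} places a fundamental domain~$D_i$ for $\Gamma_i \actson V_i$ inside the open set~$K_i$, rather than just asking that $\Gamma_i \cdot K_i = V_i$. Once the bumps are chosen, local finiteness and the $\Gamma$-equivariance of the cover make everything else a formal check.
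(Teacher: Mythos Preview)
Your proposal is correct and follows the same high-level strategy as the paper: reduce to constructing a $\Gamma$-equivariant partition of unity subordinate to~$(\gamma\cdot K_i)_{(\gamma,i)}$, then take the associated barycentric nerve map.

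The constructions of the partition of unity differ. You pull back a partition of unity from~$(U_i)_{i\in I}$ and then split each~$\widetilde\psi_i$ over the translates~$\gamma\cdot K_i$ via a bump~$\rho_i$ supported in~$K_i$, normalised by the $\Gamma$-invariant sum~$R_i=\sum_\gamma\rho_i(\gamma^{-1}\,\cdot\,)$. The paper instead refines~$(\gamma\cdot K_i)$ to evenly covered open sets, pushes these down to a finite cover~$V$ of~$M$ with chosen lifts~$\widetilde W\subset K_{i(W)}$, takes a partition of unity~$(\varphi_W)_{W\in V}$ on~$M$, and sets
\[
  \widetilde\varphi_{(\gamma,i)} \;=\; \sum_{\substack{W\in V\\ i(W)=i}} \chi_{\gamma\cdot\widetilde W}\cdot(\varphi_W\circ\pi).
\]
Your version is more direct to write down, but as you yourself note, it hinges on finding a bump whose positivity set has $\Gamma_i$-translates covering~$V_i$ while its support stays inside~$K_i$; you obtain this from the fundamental domain~$D_i$ (with~$\overline{D_i}\subset K_i$) appearing in the \emph{proof} of Lemma~\ref{lem:smallcovex}. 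Since Definition~\ref{def:small:equiv:cover} does not record this feature, your argument as written applies only to the specific cover produced there, whereas the paper's refinement argument works for an arbitrary small equivariant cover. For the purposes of the paper---which only ever uses the cover from Lemma~\ref{lem:smallcovex}---this makes no difference.
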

\begin{proof}
  Nerve maps associated with equivariant $\Gamma$-partitions
  of unity are continuous $\Gamma$-maps~\cite[proof of Lemma~4.8]{LS}.
  Therefore, it suffices to find a $\Gamma$-partition of unity
  of~$\ucov M$ subordinate to~$(\gamma \cdot K_i)_{(\gamma,i) \in \Gamma \times I}$.

  Starting with an open cover of~$\ucov M$ that refines~$(\gamma\cdot
  K_i)_{(\gamma,i) \in \Gamma \times I}$ and such that the universal covering map~$\pi$
  of~$M$ is a homeomorphism on each member, we find a finite open cover~$V$
  of~$M$ that refines~$(U_i)_{i \in I}$ with the following property:
  For each member~$W$ of~$V$, there exists an open subset~$\ucov W \subset \ucov M$
  such that $\pi|_{\ucov W} \colon \ucov W \to W$ is a homeomorphism and
  such that there exists an~$i(W) \in I$ with~$\ucov W \subset K_{i(W)}$.
  
  Let $(\varphi_W)_{W\in V}$ be a partition of unity of $M$ subordinate to $V$.
  Then, for~$(\gamma,i) \in \Gamma \times I$, we set
  \[ \widetilde \varphi_{(\gamma,i)}
  := \sum_{W \in V, i(W) = i}
  \chi_{\gamma \cdot \ucov W} \cdot \varphi_W \circ \pi.
  \]
  A straightforward computation shows that each~$\widetilde
  \varphi_{(\gamma,i)}$ is continuous and that
  $(\widetilde\varphi_{(\gamma,i)})_{(\gamma,i) \in \Gamma \times I}$ is a
  partition of unity that is subordinate to~$(\gamma \cdot
  K_i)_{(\gamma,i)\in \Gamma \times I}$. Moreover, we have
  \[ \fa{x \in \ucov M}
  \widetilde \varphi_{(\gamma,i)}(\lambda \cdot x)
  = \widetilde \varphi_{\lambda^{-1} \cdot (\gamma,i)}(x)
  \]
  for all~$(\gamma,i) \in \Gamma \times I$ and all~$\lambda \in \Gamma$.
  This shows that $(\widetilde \varphi_{(\gamma,i)})_{(\gamma,i) \in \Gamma \times I}$
  has the desired properties.
\end{proof}

\begin{lem}\label{lem:simplicialhomology}
  Let $\Gamma$ be a group and let $X$ be a simplicial complex
  with a simplicial $\Gamma$-action that is free on the set of
  all simplices.
  Then the canonical chain map~$C_*^s(X;\Z) \longrightarrow
  C_*(|X|;\Z)$~\cite[Chapter~4.4]{spanier} is a $\Z \Gamma$-chain
  homotopy equivalence.
\end{lem}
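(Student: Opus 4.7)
The plan is to argue via the mapping cone. Both $C_*^s(X;\Z)$ and $C_*(|X|;\Z)$ are nonnegatively graded, so the conclusion will follow once we check that (a) each of them is a chain complex of free $\Z\Gamma$-modules, and (b) the canonical chain map induces an isomorphism on homology; for then the mapping cone is a nonnegatively graded acyclic chain complex of free $\Z\Gamma$-modules, hence $\Z\Gamma$-contractible by the standard fact, making the original chain map a $\Z\Gamma$-chain homotopy equivalence.

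For (a), I would first observe that the hypothesis, that $\Gamma$ acts freely on the set of simplices of $X$, already forces the action on the geometric realisation $|X|$ to be free. Indeed, every point of $|X|$ lies in the interior of a unique simplex, so a stabiliser element fixes that simplex setwise and is therefore trivial. Freeness on $|X|$ then immediately implies that $\Gamma$ acts freely on the set of singular simplices of $|X|$ (a stabilising element would fix every point in the image), so $C_*(|X|;\Z)$ is free as a $\Z\Gamma$-module with basis given by orbit representatives of singular simplices. For the ordered simplicial chain complex, the $\Z\Gamma$-basis is provided by orbit representatives of ordered tuples $(v_0,\dots,v_n)$ of vertices lying in a common simplex; here freeness of the action on tuples follows because a stabilising $\gamma$ fixes each vertex $v_i$, hence fixes the simplex they span, hence is trivial.

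For (b), the fact that the canonical comparison map $C_*^s(X;\Z)\to C_*(|X|;\Z)$ induces an isomorphism on homology for every simplicial complex is classical~\cite{spanier}*{Chapter~4.4}; nothing new is needed here.

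Finally, with (a) and (b) in place, the mapping cone of the canonical map is a nonnegatively graded chain complex of free $\Z\Gamma$-modules with trivial homology. Since any acyclic, bounded-below chain complex of projective modules over an arbitrary ring is contractible, the cone is $\Z\Gamma$-contractible, which is equivalent to the canonical map being a $\Z\Gamma$-chain homotopy equivalence. The only point one has to be careful about is that the equivariance of the chain contraction is automatic from the equivariance of the modules and differentials; the mild obstacle is thus really (a), i.e.\ verifying that freeness on simplices is strong enough to upgrade both classical chain complexes to \emph{free} $\Z\Gamma$-complexes so that the projectivity hypothesis in the mapping cone argument applies.
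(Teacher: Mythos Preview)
Your argument is correct and follows essentially the same route as the paper: verify that both complexes are free $\Z\Gamma$-complexes and that the canonical map is a quasi-isomorphism (Spanier), then conclude that it is a $\Z\Gamma$-chain homotopy equivalence. The only difference is that the paper invokes this last implication as a black-box citation to Brown~\cite[Theorem~(8.4) on p.~29]{brown}, whereas you spell it out via the mapping cone and supply the explicit check of $\Z\Gamma$-freeness; the content is the same.
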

\begin{proof}
  The canonical chain map~$i \colon C_*^s(X;\Z)
  \longrightarrow C_*(|X|;\Z)$, given by affine linear
  parametrisation~\cite{spanier}*{Chapter~4.4 on p.~173}, induces an isomorphism on
  homology~\cite{spanier}*{Theorem~8 in Chapter~4.6 on p.~191}. 
  Moreover, $C_*^s(X;\Z)$ and $C_*(|X|;\Z)$ are free $\Z\Gamma$-chain
  complexes and $i$ is a $\Z\Gamma$-chain map. 
  Therefore,
  $i$ is a $\Z\Gamma$-chain homotopy equivalence~\cite[Theorem~(8.4) on p.~29]{brown}.
\end{proof}

Let $\nu \colon \ucov M \longrightarrow |N|$ be a $\Gamma$-map
as provided by Lemma~\ref{lem:nervemap}. We then write
\[\beta \colon C_*(\ucov M ;\Z) \longrightarrow C_*^s(N;\Z)
\]
for the composition of~$C_*(\nu;\Z) \colon C_*(\ucov M;\Z)
\longrightarrow C_*(|N|;\Z)$ and of a $\Z\Gamma$-chain homotopy
equivalence~$C_*(|N|;\Z) \longrightarrow C_*^s(N;\Z)$
(Lemma~\ref{lem:simplicialhomology}).  It should be noted that in
general $\beta$ will not be a degree-wise bounded linear map because
the number of subdivisions cannot be uniformly controlled.

Moreover, let $\alpha \colon C_*^s(N;\Z) \longrightarrow \Z[E^{*+1}]$
be the canonical $\Z\Gamma$-chain map, given by viewing~$C_*^s(N;\Z)$
as a subcomplex of~$\Z[E^{*+1}]$.

Finally, we set
\[ \varphi^s := \alpha \circ \beta \colon C_*(\ucov M;\Z) \longrightarrow \Z[E^{*+1}]
\]
and call this an \emph{equivariant subdivision}. By construction,
$\varphi^s$ is a $\Z \Gamma$-chain map. 

\begin{prop}\label{prop:subdivF}
  In the situation of Setup~\ref{setup:amenable:covers:top}, let
  $(\gamma\cdot K_i)_{(\gamma,i) \in \Gamma \times I}$ be a small
  equivariant open cover associated with~$U = (U_i)_{i \in I}$ and let $\varphi^s$
  be an equivariant subdivision as above. We 
  set
  \[   F := \bigcup_{i \in I} \{ \gamma \in \Gamma \mid \gamma \cdot K_i \cap K_i \neq \emptyset \}
  \subset \Gamma.
  \]
  Then:
  \begin{enumerate}
  \item The set~$F$ is finite.
  \item If $\mult(U) \leq n$, then
    for each singular $n$-simplex~$\sigma \colon \Delta^n
    \longrightarrow \ucov M$ and every simplex occuring
    in~$\varphi^s(\sigma)$, at least two entries have the same
    $F$-colour.

    Here, $(\gamma,i), (\lambda, j) \in E$ have the \emph{same
      $F$-colour} if $i = j$ and $\gamma^{-1}\cdot \lambda \in
    F \cap \Gamma_i$.
  \end{enumerate}
\end{prop}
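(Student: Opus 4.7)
For part (1), since $I$ is finite, it suffices to bound $\{\gamma \in \Gamma : \gamma \cdot K_i \cap K_i \neq \emptyset\}$ for each fixed~$i \in I$. Because $K_i$ is relatively compact and the deck transformation action $\Gamma \actson \ucov M$ is properly discontinuous, only finitely many $\gamma \in \Gamma$ can satisfy $\gamma \cdot \overline{K_i} \cap \overline{K_i} \neq \emptyset$, and so a fortiori only finitely many satisfy $\gamma \cdot K_i \cap K_i \neq \emptyset$. Finiteness of $F$ follows.

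For part (2), I would first unpack what a simplex of $\varphi^s(\sigma) = \alpha(\beta(\sigma))$ looks like: the map $\alpha$ is literally the inclusion of $C_*^s(N;\Z)$ into $\Z[E^{*+1}]$, identifying an ordered tuple of vertices of~$N$ that spans a simplex with the corresponding element of $E^{n+1}$. Hence every simplex appearing in $\varphi^s(\sigma)$ is of the form $((\gamma_0,i_0),\dots,(\gamma_n,i_n)) \in E^{n+1}$ such that the pairwise distinct entries among them span a simplex of~$N$, i.e., the corresponding sets $\gamma_r \cdot K_{i_r}$ admit a common point~$\tilde x \in \ucov M$. If two entries coincide, they trivially share the same $F$-colour, since the neutral element lies in $F \cap \Gamma_i$ for every~$i \in I$.

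So the content is in the case where all $n+1$ entries are pairwise distinct. Projecting the common point~$\tilde x$ gives $x := \pi(\tilde x) \in \bigcap_{r=0}^n U_{i_r}$. The hypothesis $\mult(U) \leq n$ then forces $|\{i \in I : x \in U_i\}| \leq n$, so among the $n+1$ indices $i_0,\dots,i_n$ the pigeonhole principle yields $k \neq l$ with $i_k = i_l$. The intersection $\gamma_k \cdot K_{i_k} \cap \gamma_l \cdot K_{i_k}$ is non-empty (it contains~$\tilde x$), so the fourth bullet of Definition~\ref{def:small:equiv:cover} gives $\gamma_k^{-1} \cdot \gamma_l \in \Gamma_{i_k}$; translating $\tilde x$ by~$\gamma_k^{-1}$ shows that $K_{i_k} \cap (\gamma_k^{-1} \gamma_l) \cdot K_{i_k} \neq \emptyset$, and therefore $\gamma_k^{-1} \gamma_l \in F$ by definition of~$F$. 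Hence $\gamma_k^{-1} \gamma_l \in F \cap \Gamma_{i_k}$, which is precisely the $F$-colour identity.

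I do not anticipate a genuine obstacle: the proof is essentially a pigeonhole argument on the indices~$i_r$, combined with the fact that the small equivariant cover of Definition~\ref{def:small:equiv:cover} is engineered so that non-empty intersections between translates of a single $K_i$ can only occur along the subgroup $\Gamma_i$. The mildly subtle point is remembering that $\beta$ need not be given by any explicit formula, but this is irrelevant, because by construction its image lies in $C_*^s(N;\Z)$ and only the combinatorial structure of the simplices of~$N$ is used.
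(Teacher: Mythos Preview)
Your proof is correct and follows essentially the same route as the paper: finiteness from proper discontinuity and relative compactness, then the pigeonhole argument on the indices~$i_r$ combined with the intersection property of the small equivariant cover. You are slightly more explicit than the paper in separating out the degenerate case where two entries literally coincide, but this is a cosmetic difference only.
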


\begin{proof}
  \emph{Ad~1.} As the deck transformation action~$\Gamma \curvearrowright \ucov M$
  is properly discontinuous, as $I$ is finite, and as each~$K_i$ is relatively
  compact, the set~$F$ is finite.  

  \emph{Ad~2.}
  Let $((\gamma_0,i_0), \dots, (\gamma_n,i_n))$ be an $n$-simplex that
  appears in~$\varphi^s(\sigma)$. 
  By definition of~$N$ and $C_*^s(N;\Z)$, this means that
  \[ \bigcap_{r=0}^n \gamma_r \cdot K_{i_r} \neq \emptyset.
  \]
  In particular, also $\bigcap_{r=0}^n \pi^{-1}(U_{i_r}) \neq \emptyset$.
  The multplicity of~$(\pi^{-1}(U_i))_{i \in I}$ equals the
  multiplicity of~$(U_i)_{i \in I}$, which is~$\leq n$. In
  particular, at least two of the $n+1$~elements~$i_0,\dots, i_r$ must
  be equal, say~$i_0 = i_1 =: i$.
  By definition of small equivariant open covers associated
  with~$(U_i)_{i \in I}$ (Definition~\ref{def:small:equiv:cover}),
  we have that $\gamma_0 \cdot K_i \cap \gamma_1 \cdot K_i \neq \emptyset$
  implies $\gamma_0^{-1} \cdot \gamma_1 \in \Gamma_i$.
  Moreover, we also have by definition of~$F$ that~$\gamma_0^{-1} \cdot \gamma_1 \in F$.
  Thus,
  $(\gamma_0, i_0)$ and $(\gamma_1,i_1)$ have the same $F$-colour.  
\end{proof}

\subsection{The simplicial volume estimate}

We combine the previously developed subdivision tools
and take advantage of asphericity to get back from the
combinatorial resolutions to the singular chain complex:

\begin{lem}\label{lem:fill}
  Let $M$ be a connected aspherical CW-complex with fundamental group~$\Gamma$
  and let $S$ be a free $\Gamma$-set. Then there exists a $\Z\Gamma$-chain
  map~$\psi \colon \Z[S^{*+1}] \to C_*(\widetilde M;\Z)$ that extends the identity~$\Z \to \Z$
  and satisfies
  \[ \|\psi\| \leq 1
  \]
  with respect to~$|\cdot|_1$.
\end{lem}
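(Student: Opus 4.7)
The plan is to define $\psi$ on $\Z\Gamma$-generators by induction on degree in such a way that each generating tuple $(s_0, \dots, s_n) \in S^{n+1}$ is sent to a single singular $n$-simplex in $\widetilde M$; $\Z$-linear extension then automatically yields an operator of $\ell^1$-norm at most~$1$. The essential input is that, since $M$ is aspherical, the universal cover $\widetilde M$ is a simply connected CW-complex with vanishing higher homotopy, hence contractible; consequently, every continuous map $|\partial \Delta^n| \to \widetilde M$ extends to $|\Delta^n|$. Note that the diagonal $\Gamma$-action on $S^{n+1}$ is free because $\Gamma \actson S$ is free, so we may choose one tuple per $\Gamma$-orbit and extend equivariantly without ambiguity.

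For the base case, I would pick a transversal $R_0 \subset S$ for the $\Gamma$-orbits and, for each $s \in R_0$, a point $x_s \in \widetilde M$; then define $\psi_0(s)$ to be the constant singular $0$-simplex at $x_s$ and extend $\Z\Gamma$-equivariantly. The augmentation condition is immediate because every singular $0$-simplex maps to~$1$. For the inductive step, assume $\psi_0, \dots, \psi_{n-1}$ have been constructed, $\Z\Gamma$-equivariantly and such that (i) every generator $(s_0, \dots, s_k)$ is sent to a single singular simplex $\sigma_{(s_0,\dots,s_k)} \colon |\Delta^k| \to \widetilde M$, and (ii) the $i$-th geometric face of $\sigma_{(s_0,\dots,s_k)}$ coincides with $\sigma_{(s_0,\dots,\widehat{s_i},\dots,s_k)}$ as a singular $(k-1)$-simplex. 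Given $(s_0, \dots, s_n) \in S^{n+1}$, the simplices $\sigma_{(s_0,\dots,\widehat{s_i},\dots,s_n)}$ fit together along their common sub-faces, by (ii) at level $n-1$, to yield a continuous map $f \colon |\partial \Delta^n| \to \widetilde M$. Contractibility of $\widetilde M$ provides an extension $\sigma_{(s_0,\dots,s_n)} \colon |\Delta^n| \to \widetilde M$, which I would choose for one representative per $\Gamma$-orbit in $S^{n+1}$ and propagate via equivariance. The chain-map identity $\partial \sigma_{(s_0,\dots,s_n)} = \psi_{n-1}(\partial(s_0, \dots, s_n))$ is then immediate from (ii) at level $n$, which is built into the construction.

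The norm bound follows at once: on a reduced chain $c = \sum_s a_s \cdot s \in \Z[S^{n+1}]$, one has $\psi(c) = \sum_s a_s \cdot \sigma_s$, whose $\ell^1$-norm is at most $\sum_s |a_s| = |c|_1$, so $\|\psi\| \leq 1$. I expect the only subtle point to be the careful bookkeeping of the face-compatibility condition (ii) through the induction and its interaction with the equivariant extension; the latter is consistent precisely because of the freeness of the diagonal $\Gamma$-action on each $S^{k+1}$, so no essential obstacle arises beyond this bookkeeping.
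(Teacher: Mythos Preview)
Your proposal is correct and follows exactly the approach the paper has in mind: the paper's proof says only ``Using asphericity of~$M$, inductively filling simplices, one can construct a $\Z\Gamma$-chain map~$\psi$ that extends~$\id_\Z$'' and notes that ``each tuple is mapped to a single singular simplex,'' which is precisely the construction you have spelled out in detail. Your careful tracking of the face-compatibility condition~(ii) and the use of freeness of the diagonal action to make the equivariant extension well-defined are the right details to fill in.
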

\begin{proof}
  Using asphericity of~$M$, inductively filling simplices, one can
  construct a $\Z \Gamma$-chain map~$\psi \colon \Z[S^{*+1}]
  \longrightarrow C_*(\ucov M;\Z)$ that extends~$\id_\Z$.

  This filling construction satisfies~$\|\psi\|\leq 1$ because
  each tuple is mapped to a single singular simplex.
\end{proof}

\begin{prop}\label{prop:svestimate}
  In the situation of Setup~\ref{setup:amenable:covers:top},
  let $M$ be aspherical, 
  let $\varphi^s$ be an equivariant subdivision
  as in Section~\ref{subsec:eqsubdiv}, 
  and let $z \in \Z \otimes_{\Z
    \Gamma} C_n(\ucov M;\Z)$ be a cycle.
  Moreover, let $S$ be a free $\Gamma$-set,
  let $\alpha \colon \Gamma \actson (X,\mu)$ be a free
  standard $\Gamma$-space, 
  and let $\Phi \colon
  \Z[E^{*+1}] \longrightarrow \linfz X \otimes_\Z \Z[S^{*+1}]$
  be a $\Z\Gamma$-chain map
  that extends the inclusion~$\Z \hookrightarrow \linfz X$.
  Then
  \[ \ifsvp[big]{[z]} \alpha \leq \bigl| (\Phi\circ\varphi^s)_\Gamma (z) \bigr|_1.
  \]
\end{prop}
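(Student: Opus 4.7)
The plan is to lift the combinatorial cycle $(\Phi \circ \varphi^s)_\Gamma(z)$ from $\linfz X \otimes_{\Z\Gamma} \Z[S^{*+1}]$ back to an honest singular cycle in $\linfz X \otimes_{\Z\Gamma} C_*(\ucov M;\Z)$ using a norm-nonincreasing filling map produced from asphericity, and then to invoke the fundamental theorem of homological algebra to show that the resulting cycle actually represents $[z]^\alpha$.

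First, I would apply Lemma~\ref{lem:fill} to obtain a $\Z\Gamma$-chain map $\psi \colon \Z[S^{*+1}] \to C_*(\ucov M;\Z)$ that extends $\id_\Z$ and satisfies $\|\psi\| \leq 1$ with respect to $|\cdot|_1$. Tensoring over~$\Z$ with~$\linfz X$ yields a $\Z\Gamma$-chain map
\[ \id_{\linfz X} \otimes_\Z \psi \colon \linfz X \otimes_\Z \Z[S^{*+1}] \longrightarrow \linfz X \otimes_\Z C_*(\ucov M;\Z), \]
and the composition $\Psi := (\id \otimes \psi) \circ \Phi \circ \varphi^s$ is then a $\Z\Gamma$-chain map from $C_*(\ucov M;\Z)$ to $\linfz X \otimes_\Z C_*(\ucov M;\Z)$ that extends $\Z \hookrightarrow \linfz X$ in the sense of Definition~\ref{def:extends}.

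Next I would identify the homology class of $\Psi_\Gamma(z)$. Because $M$ is aspherical, $C_*(\ucov M;\Z) \to \Z$ is a free $\Z\Gamma$-resolution (Example~\ref{exa:asphres}); by the flatness of $\linfz X$ over~$\Z$, the complex $\linfz X \otimes_\Z C_*(\ucov M;\Z) \to \linfz X$ is a $\Z\Gamma$-resolution of $\linfz X$ for the diagonal action (precisely as in the argument for Corollary~\ref{cor:lfszres}). The inclusion $\iota \colon c \mapsto 1 \otimes c$ is another $\Z\Gamma$-chain map that extends $\Z \hookrightarrow \linfz X$, so Lemma~\ref{lem:extendsunique} yields $\Psi \simeq_{\Z\Gamma} \iota$. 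Passing to $\Gamma$-coinvariants produces chain-homotopic maps $C_*(M;\Z) \to \linfz X \otimes_{\Z\Gamma} C_*(\ucov M;\Z)$, so $\Psi_\Gamma(z)$ is a cycle representing $\iota_\Gamma(z) = 1 \otimes z$ in $H_n(M;\linfz X)$; that is, $\Psi_\Gamma(z)$ represents the class $[z]^\alpha$.

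Finally, since $\psi$ sends every $S$-tuple to a single singular simplex, the induced map $(\id \otimes \psi)_\Gamma$ on the $\ell^1$-normed quotient chain complexes has operator norm at most $1$, so
\[ \ifsvp[big]{[z]}\alpha \leq \bigl|\Psi_\Gamma(z)\bigr|_1 = \bigl|(\id \otimes \psi)_\Gamma (\Phi \circ \varphi^s)_\Gamma(z)\bigr|_1 \leq \bigl|(\Phi \circ \varphi^s)_\Gamma(z)\bigr|_1, \]
which is the desired inequality. I expect the only real subtlety to be in the middle paragraph: one must verify carefully that $\linfz X \otimes_\Z C_*(\ucov M;\Z)$, equipped with the diagonal $\Gamma$-action, is genuinely a $\Z\Gamma$-resolution of $\linfz X$ before Lemma~\ref{lem:extendsunique} becomes applicable; the construction of $\Psi$ and the $\ell^1$-estimate are otherwise routine.
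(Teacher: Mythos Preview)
Your proof is correct and follows essentially the same route as the paper: construct $\Psi = (\id\otimes\psi)\circ\Phi\circ\varphi^s$ via Lemma~\ref{lem:fill}, use asphericity and Lemma~\ref{lem:extendsunique} to identify $\Psi$ up to $\Z\Gamma$-homotopy with the change-of-coefficients map, and conclude via~$\|\psi\|\leq 1$. Your added justification that $\linfz X \otimes_\Z C_*(\ucov M;\Z)$ is a $\Z\Gamma$-resolution (by flatness of~$\linfz X$) is exactly the point the paper leaves implicit.
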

\begin{proof}
  The methods of Section~\ref{subsec:eqsubdiv} and
  Section~\ref{sec:rokhlin} apply because the fundamental
  group~$\Gamma$ of the finite aspherical CW-complex~$M$ is
  torsion-free and countable. 
  Let $\psi \colon \Z[S^{*+1}] \to C_*(\widetilde M;\Z)$ be
  a $\Z\Gamma$-chain map that extends the identity~$\Z \to \Z$
  and satisfies~$\|\psi\|\leq 1$; such a chain map exists
  by Lemma~\ref{lem:fill}. 
  Then
  \[ \Psi := (\id_{\linfz X} \otimes_\Z \psi) \circ \Phi \circ \varphi^s
  \]
  is a $\Z\Gamma$-chain map~$C_*(\widetilde M;\Z) \to \linfz X
  \otimes_\Z C_*(\widetilde M;\Z)$ that extends the inclusion~$\Z
  \hookrightarrow \linfz X$; because $M$ is aspherical,
  $\Psi$ is thus $\Z\Gamma$-chain homotopic
  to the change of coefficients map
  (Lemma~\ref{lem:extendsunique} and Example~\ref{exa:asphres}) and the induced map
  \[ \Psi_\Gamma \colon
  \Z \otimes_{\Z\Gamma} C_*(\widetilde M;\Z)
  \to \linfz X \otimes_{\Z \Gamma} C_*(\widetilde M;\Z) 
  \]
  is chain homotopic to the change of coefficients map.
  In particular, $\Psi_\Gamma (z)$ is a cycle representing~$[z]^\alpha$;
  we thus obtain
  \begin{align*}
    \ifsvp[big]{[z]} \alpha
    & \leq \bigl| \Psi_\Gamma(z) \bigr|_1
    \leq \|\psi\| \cdot 
    \bigl| (\Phi\circ\varphi^s)_\Gamma (z) \bigr|_1
    \\
    & \leq 
    \bigl| (\Phi\circ\varphi^s)_\Gamma (z) \bigr|_1,
  \end{align*}
  as claimed.
\end{proof}

\subsection{The simplicial volume estimate, essential simplices}

We refine the simplicial volume estimate of
Proposition~\ref{prop:svestimate} to a bound that only
counts essential simplices and ignores
degenerate simplices. 

\begin{prop}\label{prop:essreduction}
  Let $\Gamma$ be a torsion-free group and let $S$ be a non-empty
  free $\Gamma$-set. Then there exists a $\Gamma$-equivariant
  chain map~$\eta \colon \Z[S^{*+1}] \longrightarrow \Z[S^{*+1}]$
  extending~$\id_\Z \colon \Z \to \Z$ (whence $\eta \simeq_{\Z\Gamma} \id_{\Z[S^{*+1}]}$)
  with following property: For all chains~$c \in \Z[S^{k+1}]$,
  we have
  \[ \bigl| \eta_k (c) \bigr|_1 \leq (k+1)! \cdot \essn c.
  \]
\end{prop}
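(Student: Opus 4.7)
The plan is to construct $\eta$ by sending each essential tuple to its signed sorted representative—relative to a $\Gamma$-equivariant, restriction-compatible family of linear orders on the finite subsets of $S$—and by killing every degenerate tuple. This will yield a chain map satisfying the even stronger bound $|\eta_k(c)|_1\leq \essn c$.

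First, I would observe that since $\Gamma$ is torsion-free and acts freely on $S$, it acts freely on the collection of finite subsets of $S$: if $\gamma\cdot F=F$ for some finite $F\subset S$, then $\gamma$ permutes $F$, so some power $\gamma^n$ fixes each $x\in F$ and therefore equals $e$, which forces $\gamma=e$ by torsion-freeness. Using the axiom of choice, I would pick a transversal of $\Gamma$-orbits on finite subsets of $S$, equip each representative with an arbitrary linear order, and extend $\Gamma$-equivariantly, arranging the choices so that, for nested $F'\subset F$, the induced order $<_{F'}$ agrees with the restriction of $<_F$.

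Next, for an essential tuple $(s_0,\dots,s_k)$ with underlying set $F=\{s_0,\dots,s_k\}$, let $\pi\in S_{k+1}$ be the permutation satisfying $s_{\pi(0)}<_F\cdots<_F s_{\pi(k)}$ and set
\[
  \eta_k((s_0,\dots,s_k)):=\mathrm{sgn}(\pi)\cdot(s_{\pi(0)},\dots,s_{\pi(k)}),
\]
setting $\eta_k:=0$ on every degenerate tuple and extending $\Z$-linearly. The $\Gamma$-equivariance of $\eta$ is immediate from the equivariance of the chosen orders, $\eta_0=\id$ extends $\id_\Z$, and since each essential basis tuple maps to a single signed sorted tuple of $\ell^1$-norm $1$, the bound $|\eta_k(c)|_1\leq \essn c\leq (k+1)!\,\essn c$ follows by the triangle inequality.

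The real content is verifying $\partial\eta_k=\eta_{k-1}\partial$. On essential basis tuples, sign consistency reduces the identity to the sorted case, where restriction-compatibility ensures that all faces of a sorted tuple are again sorted, so $\eta_{k-1}$ acts as the identity and the identity is tautological. The main obstacle is the verification on a degenerate tuple $\tau$, where $\eta_k(\tau)=0$ and one must show $\eta_{k-1}(\partial\tau)=0$. If $\tau$ contains two disjoint duplicate pairs or a triple repetition, every face of $\tau$ is still degenerate and hence annihilated by $\eta_{k-1}$. The delicate case is a single duplicate pair $s_i=s_j$ with $i<j$: precisely the faces at positions $i$ and $j$ are essential, and they share the same underlying set. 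These two essential faces differ by a cyclic shift of $j-i$ consecutive positions, so their sort permutations differ in sign by $(-1)^{j-i-1}$; combined with the opposite face signs $(-1)^i$ and $(-1)^j$, the two signed contributions cancel, giving $\eta_{k-1}(\partial\tau)=0$ as required.
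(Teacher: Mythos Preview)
Your argument hinges on producing a $\Gamma$-equivariant family of linear orders~$(<_F)_F$ on the finite subsets of~$S$ that is \emph{restriction-compatible}, i.e., $<_{F'}$ is the restriction of~$<_F$ whenever $F'\subset F$. This step fails in general. A restriction-compatible family on all finite subsets is the same datum as a single total order~$<$ on~$S$ (declare $x<y$ via $<_{\{x,y\}}$; compatibility gives transitivity). $\Gamma$-equivariance of the family then says that every~$\gamma\in\Gamma$ acts on~$S$ by an order-preserving bijection. If $S$ contains a free orbit (e.g., take $S=\Gamma$), this is precisely a left-invariant total order on~$\Gamma$, i.e., $\Gamma$ is left-orderable. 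But torsion-freeness does not imply left-orderability: there exist finitely presented torsion-free groups with no left-invariant order. For such~$\Gamma$ (and $S=\Gamma$) your construction cannot be carried out, and without restriction-compatibility your map is not a chain map: already for an essential triple~$(s_0,s_1,s_2)$ sorted by~$<_F$, the face~$(s_1,s_2)$ need not be sorted by~$<_{\{s_1,s_2\}}$, so $\partial\eta_2\neq\eta_1\partial$. The phrase ``arranging the choices so that\dots'' thus hides an obstruction that is genuinely group-theoretic, not merely a matter of bookkeeping.

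The paper circumvents this by factoring through the free resolution~$\Z[D^{*+1}]$, where $D$ is the set of finite non-empty subsets of~$S$: a barycentric subdivision map $\delta\colon\Z[S^{*+1}]\to\Z[D^{*+1}]$ (which is alternating in the entries and hence annihilates degenerate tuples, at the cost of the factor~$(k+1)!$), followed by a collapse $\varrho\colon\Z[D^{*+1}]\to\Z[S^{*+1}]$ defined via a $\Gamma$-fundamental domain in~$D$ and a single basepoint~$\sigma_0\in S$. Neither step requires any ordering, only the freeness of the $\Gamma$-action on~$D$, which torsion-freeness guarantees. Your sorting idea does work, with the sharper constant~$1$, under the additional hypothesis that $\Gamma$ is left-orderable; but the proposition as stated needs an argument that does not use orderability.
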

\begin{proof}
  We argue similarly to case of integral singular
  homology~\cite{campagnolosauer} through a barycentric
  subdivision:
  As $S$ is non-empty, there exists a~$\sigma_0 \in S$.
  
  We write~$D$ for the set of all finite non-empty subsets of~$S$.
  Because $\Gamma$ is torsion-free, the induced action on~$D$
  is also free. Hence, $\Z[D^{*+1}]$ and $\Z[S^{*+1}]$ are
  free $\Z\Gamma$-resolutions of~$\Z$.

  First, choosing a $\Gamma$-fundamental domain~$F \subset
  D$, we have the $\Z\Gamma$-chain map~$\varrho \colon \Z[D^{*+1}]
  \longrightarrow \Z[S^{*+1}]$ defined as follows: For~$\gamma_0, \dots,
  \gamma_k \in \Gamma$ and $x_0, \dots, x_k \in F$, we set
  \[ \varrho( \gamma_0 \cdot x_0, \dots, \gamma_k \cdot x_k)
     := (\gamma_0 \cdot \sigma_0, \dots, \gamma_k \cdot \sigma_0).
  \]

  Conversely, we construct the barycentric subdivision
  map~$\delta \colon \Z[S^{*+1}] \longrightarrow \Z[D^{*+1}]$
  inductively as follows: 
  In degree~$0$, we set
  $\delta(x_0) := \{x_0\}
  $ 
  for all~$x_0 \in S$. For $z\in D$ let $(\_, z)\colon
  \Z[D^k]\to\Z[D^{k+1}]$ denote the linear extension of the map
  $D^k\to D^{k+1}$, $(y_1,\dots, y_k)\mapsto (y_1,\dots,
  y_k,z)$. Inductively, for~$k \in \N_{>0}$, we set
  \[ \delta(x_0, \dots, x_k)
  := \sum_{j=0}^k (-1)^{j+k} \cdot \bigl( \delta(x_0, \dots, \widehat x_j, \dots, x_k)
  , \{x_0, \dots, x_k\}\bigr)
  \]
  for all~$x_0, \dots, x_k \in S$. Note that in the previous formula
  $\delta(x_0, \dots, \widehat x_j, \dots, x_k)$ inductively
  is a linear combination of elements in~$D^k$, of norm at most~$k!$;
  moreover, we inductively see that $\delta$ is compatible with the
  boundary operator.
  Therefore, $\delta$ is a $\Z\Gamma$-chain map with
  \[ \| \delta_k \| \leq (k+1)!
  \]
  for all~$k \in \N$. Moreover, a straightforward inductive
  computation shows that $\delta$ maps $(\tau(x_0),\dots,\tau(x_k))$
  to $-\delta(x_0,\dots,x_k)$ for every transposition $\tau$. In
  particular, $\delta$ maps degenerate tuples to~$0$.

  We now consider the $\Z\Gamma$-chain map
  \[ \eta := \varrho \circ \delta \colon \Z[S^{*+1}] \longrightarrow \Z[S^{*+1}]
  \]
  By construction, $\eta$ extends~$\id_\Z$.
  Moreover, $\|\eta_k\| \leq (k+1)!$
  and $\eta_k$ maps degenerate tuples to~$0$. The claim follows.
\end{proof}
  
\begin{cor}\label{cor:svestimateess}
  In the situation of Setup~\ref{setup:amenable:covers:top},
  let $M$ be aspherical with torsion-free and countable fundamental
  group~$\Gamma$, 
  let $\varphi^s$ be an equivariant subdivision
  as in Section~\ref{subsec:eqsubdiv}, 
  and let $z \in \Z \otimes_{\Z
    \Gamma} C_n(\ucov M;\Z)$ be a cycle.
  Moreover, let $S$ be a free $\Gamma$-set,
  let $\alpha \colon \Gamma \actson (X,\mu)$ be a
  free standard $\Gamma$-space, 
  and 
  let $\Phi \colon
  \Z[E^{*+1}] \longrightarrow \linfz X \otimes_\Z \Z[S^{*+1}]$ be a $\Z\Gamma$-chain map
  that extends the inclusion~$\Z \hookrightarrow \linfz X$.  Then 
  \[ \ifsvp[big]{[z]} \alpha
  \leq (n+1)!
  \cdot \essn[big]{(\Phi \circ \varphi^s)_\Gamma(z)\bigr)}.
  \]
\end{cor}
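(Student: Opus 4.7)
The plan is to promote Proposition~\ref{prop:svestimate} from the $\ell^1$-norm to the essential norm by inserting the homological barycentric subdivision~$\eta$ from Proposition~\ref{prop:essreduction} between $\Phi$ and the filling chain map. Concretely, I would apply Proposition~\ref{prop:essreduction} to the free $\Gamma$-set~$S$, obtaining a $\Z\Gamma$-chain map $\eta \colon \Z[S^{*+1}] \to \Z[S^{*+1}]$ that extends~$\id_\Z$, kills degenerate tuples, and satisfies $|\eta_k(c)|_1 \leq (k+1)!\cdot \essn c$ for all~$c \in \Z[S^{k+1}]$.

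Next, I would form the $\Z\Gamma$-chain map
\[
\Phi' := \bigl(\id_{\linfz X}\otimes_{\Z} \eta\bigr) \circ \Phi
\colon \Z[E^{*+1}] \longrightarrow \linfz X \otimes_{\Z} \Z[S^{*+1}].
\]
Because $\eta$ extends~$\id_\Z$ and $\Phi$ extends the inclusion~$\Z \hookrightarrow \linfz X$, a short check with Definition~\ref{def:extends} shows that $\Phi'$ again extends~$\Z\hookrightarrow \linfz X$. Hence Proposition~\ref{prop:svestimate} applies to~$\Phi'$ and yields
\[
\ifsvp[big]{[z]}{\alpha}
\leq
\bigl|(\Phi'\circ \varphi^s)_\Gamma(z)\bigr|_1
=
\bigl|\bigl((\id_{\linfz X}\otimes_{\Z}\eta_n)\circ (\Phi\circ\varphi^s)\bigr)_\Gamma(z)\bigr|_1.
\]

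It then remains to verify the coefficient-wise estimate
\[
\bigl|(\id_{\linfz X}\otimes_{\Z}\eta_n)_\Gamma(c)\bigr|_1
\leq (n+1)!\cdot \essn c
\]
for all reduced chains $c \in \linfz X \otimes_{\Z\Gamma} \Z[S^{n+1}]$. Writing such a $c$ in reduced form $\sum_{t \in T} f_t\otimes t$ over a $\Gamma$-fundamental domain $T \subset S^{n+1}$, one has $(\id \otimes \eta_n)(c) = \sum_{t \in T} f_t \otimes \eta_n(t)$; since $\eta_n$ annihilates degenerate tuples and maps each essential tuple to a $\Z$-chain of $\ell^1$-norm at most~$(n+1)!$, and since passing to reduced form only decreases~$|\cdot|_1$, the desired estimate follows. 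Combining with the previous displayed inequality gives the claim.

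The only point that requires a moment of care is the behaviour of the norms under the coinvariance functor $\args \otimes_{\Z\Gamma}\args$: namely, that $\essn{\args}$ and $|\args|_1$ are computed on a reduced presentation along a $\Gamma$-fundamental domain, and that applying $\id\otimes\eta_n$ commutes with this reduction in the sense that one can still upper bound $|\eta_n(c)|_1$ by summing only the essential fundamental-domain contributions. This is straightforward because $\eta$ is $\Z\Gamma$-equivariant and kills degenerate simplices, but it is the one step where bookkeeping could go wrong.
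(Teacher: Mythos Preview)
Your proposal is correct and follows essentially the same route as the paper: apply Proposition~\ref{prop:essreduction} to~$S$ to get~$\eta$, replace~$\Phi$ by~$(\id_{\linfz X}\otimes_\Z \eta)\circ\Phi$, invoke Proposition~\ref{prop:svestimate}, and then use the norm bound on~$\eta_n$. Your additional remarks on the coinvariance bookkeeping are a welcome expansion of what the paper leaves implicit.
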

\begin{proof}
  We apply Proposition~\ref{prop:svestimate} to the
  composition~$(\id_{\linfz X} \otimes_{\Z\Gamma} \eta) \circ \Phi$,
  where $\eta$ is a $\Z\Gamma$-chain
  map as provided by Proposition~\ref{prop:essreduction}
  and combine the resulting estimate with the norm estimate
  of Proposition~\ref{prop:essreduction}. 
\end{proof}

\section{Proof of Theorem~\ref{thm:main}}\label{sec:proof}

We prove Theorem~\ref{thm:main} following the outline
of Section~\ref{sec:outline}, i.e., by combining the results from
Section~\ref{sec:rokhlin} and Section~\ref{sec:subdiv}.
More precisely, we prove the following slightly more general statement:

\begin{thm}\label{thm:maingen}
  Let $M$ be a finite connected aspherical CW-complex with 
  fundamental group~$\Gamma$, let $n \in \N$, and let $U = (U_i)_{i\in I}$ be an open amenable
  cover of~$M$ by path-connected subsets with~$\mult U \leq n$.
  Moreover, let $\Gamma \actson (X,\mu)$ be an essentially
  free standard $\Gamma$-space. 
  Let $x \in H_n(M;\Z)$. Then
  \[ \ifsvp{x} \alpha = 0.
  \]
\end{thm}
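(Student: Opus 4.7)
The plan is to combine the equivariant subdivision of Section~\ref{subsec:eqsubdiv} with the Rokhlin chain maps of Section~\ref{sec:rokhlin}, following the outline diagram in Section~\ref{sec:outline}, and then to apply Corollary~\ref{cor:svestimateess}. First, choose a reduced cycle~$z \in \Z \otimes_{\Z\Gamma} C_n(\widetilde M;\Z)$ representing~$x$. Since $M$ is finite and aspherical, $\Gamma$ is countable and torsion-free; Lemma~\ref{lem:smallcovex} furnishes a small equivariant open cover $(\gamma \cdot K_i)_{(\gamma, i) \in \Gamma \times I}$ of~$\widetilde M$ associated with~$(U_i)_{i \in I}$, from which we build an equivariant subdivision chain map~$\varphi^s \colon C_*(\widetilde M;\Z) \to \Z[E^{*+1}]$ with $E = \Gamma \times I$. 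Taking the finite set $F := \bigcup_{i \in I} \{\gamma \in \Gamma \mid \gamma \cdot K_i \cap K_i \neq \emptyset\}$ from Proposition~\ref{prop:subdivF}, every simplex appearing with non-zero coefficient in~$\varphi^s(z)$ satisfies the $F$-colouring condition, because $\mult U \leq n$.

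Next, for each parameter~$\delta \in \R_{>0}$, we turn this $F$-colouring of~$\varphi^s(z)$ into smallness via the Rokhlin chain map. Since $\alpha$ is essentially free, we may restrict to a $\Gamma$-invariant conull Borel subset on which the action becomes free; this does not affect~$\linfz X$ nor~$\ifsvp{x}{\alpha}$. Applying Theorem~\ref{thm:rokhlinlemma} to each of the finitely many amenable subgroups~$\Gamma_i$ with the finite set~$F \cap \Gamma_i$ and tolerance~$\delta$, and intersecting the resulting conull sets, we obtain Rokhlin data as in Setup~\ref{setup:rokhlin} and hence the Rokhlin chain map~$\Phi^\delta \colon \Z[E^{*+1}] \to \lfsz{X \times S^{*+1}}$ of Definition~\ref{rem:equivrokhlin}. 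By Theorem~\ref{thm:rokhlinmap}(1), $\Phi^\delta$ is a $\Z\Gamma$-chain map extending~$\Z \hookrightarrow \linfz X$; applying Theorem~\ref{thm:rokhlinmap}(2) to each $F$-coloured simplex in~$\varphi^s(z)$ and summing via the triangle inequality gives
\[ \essn[big]{\Phi^\delta(\varphi^s(z))} \leq \delta \cdot |\varphi^s(z)|_1. \]

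Finally, post-composing $\Phi^\delta$ with the isometric $\Z\Gamma$-chain isomorphism~$\zeta_S$ of Lemma~\ref{lem:lfsz} yields a $\Z\Gamma$-chain map $\Z[E^{*+1}] \to \linfz X \otimes_\Z \Z[S^{*+1}]$ that extends~$\Z \hookrightarrow \linfz X$ and preserves essential norms. Feeding this into Corollary~\ref{cor:svestimateess} produces
\[ \ifsvp{x}{\alpha} \leq (n+1)! \cdot \essn[big]{(\zeta_S \circ \Phi^\delta \circ \varphi^s)_\Gamma(z)} \leq (n+1)! \cdot \delta \cdot |\varphi^s(z)|_1, \]
and since $|\varphi^s(z)|_1$ is a fixed constant independent of~$\delta$, letting $\delta \to 0$ yields $\ifsvp{x}{\alpha} = 0$. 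I expect no single step to be a genuine obstacle, as the machinery of Sections~\ref{sec:rokhlin} and~\ref{sec:subdiv} is tailor-made for this assembly; the points requiring the most care are verifying that the essential norm descends to the $\Gamma$-quotient compatibly with the simplex-wise Rokhlin bound, and handling the essentially free versus free reduction via restriction to a $\Gamma$-invariant conull subset.
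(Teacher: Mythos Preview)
Your proposal is correct and follows essentially the same approach as the paper's proof: reduce to a free action by passing to a conull subset, apply the equivariant subdivision~$\varphi^s$ to obtain a cycle whose simplices satisfy the $F$-colouring condition via Proposition~\ref{prop:subdivF}, feed this through the Rokhlin chain map~$\Phi^\delta$ (composed with~$\zeta_S$), and conclude via Corollary~\ref{cor:svestimateess} and Theorem~\ref{thm:rokhlinmap}. The only cosmetic difference is that the paper passes to the free action once at the outset rather than inside the $\delta$-loop, and writes~$z' := (\varphi^s)_\Gamma(z)$ explicitly for the subdivided cycle in the coinvariants (your~$\varphi^s(z)$ should be read this way throughout).
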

\begin{proof}
  By passing to a conull subspace, we may assume that $\Gamma \actson
  (X,\mu)$ is a free standard $\Gamma$-space.
  
  By hypothesis, in particular, we are in the situation of
  Setup~\ref{setup:amenable:covers:top}.  As $M$ is finite, $M$ is
  compact and $\Gamma$ is countable and torsion-free.  We may assume that $I$
  is finite.  Therefore, Lemma~\ref{lem:smallcovex} and
  Section~\ref{subsec:eqsubdiv} guarantee the existence of an
  equivariant subdivision~$\varphi^s$.

  Let $z \in \Z
  \otimes_{\Z\Gamma} C_n(\ucov M;\Z)$ be a cycle
  representing~$x$ in~$H_n(M;\Z)$ 
  and let
  \[ z' := (\varphi^s)_\Gamma(z)\in \Z  \otimes_{\Z\Gamma} \Z[E^{n+1}]
  \]
  be the corresponding subdivided cycle.

  For each~$\delta \in \R_{>0}$, we can construct a corresponding
  Rokhlin chain map~$\Phi^\delta \colon \Z[E^{*+1}] \to \lfsz {X \times S^{*+1}}$
  (Lemma~\ref{lem:lfsz} and Definition~\ref{rem:equivrokhlin}).
  Let
  \[ \overline \Phi^\delta := \zeta_S \circ \Phi^\delta
  \colon \Z[E^{*+1}] \to \linfz X \otimes_\Z \Z[S^{*+1}].
  \]
  Then $\overline \Phi^\delta$ is a $\Z\Gamma$-chain
  map that extends the inclusion~$\Z \hookrightarrow \linfz X$. 
  Because $M$ is aspherical, we therefore obtain
  \[ \ifsvp x \alpha
  \leq (n+1)! \cdot \essn[big]{(\overline\Phi^\delta)_\Gamma(z')}
  \]
  from Corollary~\ref{cor:svestimateess}.
  By Proposition~\ref{prop:subdivF}, every simplex in~$z'$
  satisfies the colouring condition required in Theorem~\ref{thm:rokhlinmap}.
  Therefore, Theorem~\ref{thm:rokhlinmap} shows that
  \[ \ifsvp x \alpha
  \leq (n+1)! \cdot \delta \cdot |z'|_1.
  \]
  Taking~$\delta \to 0$ gives $\ifsvp x \alpha = 0$.
\end{proof}

\begin{proof}[Proof of Theorem~\ref{thm:main}]
  We only need to note that every closed connected manifold
  has the homotopy type of a finite connected CW-complex.
  Therefore, we can apply Theorem~\ref{thm:maingen}
  to $n:=\dim M$ and $[M] \in H_n(M;\Z)$.
\end{proof}

\section{Examples of manifolds admitting small amenable covers}\label{sec:exa}

In this section, we recall standard techniques to produce small
amenable covers and we apply them to the manifolds in Section~\ref{sec:intro:ex}.

One key ingredient is the following elementary fact:

\begin{lem}\label{lemma:pullback:cover}
Let $X$ be a connected topological space and let $Y$ be a connected CW-complex.
Suppose that there exists a continuous map $f \colon X \to Y$
whose $\pi_1$-kernel $\ker(\pi_1(f))$ is amenable. 
Then, we have 
\[
\amcat(X) \leq \dim(Y) + 1.
\]
\end{lem}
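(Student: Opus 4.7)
The plan is to produce an amenable open cover of~$X$ by pulling back an appropriate open cover of~$Y$ along~$f$, where the cardinality bound comes from a classical Lusternik--Schnirelmann category estimate for CW-complexes.

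First, I would invoke the standard bound $\cat(Y) \leq \dim(Y)$ for CW-complexes (cf.\ \cite{CLOT}) to obtain an open cover $(V_0, \ldots, V_d)$ of~$Y$, where $d := \dim(Y)$, such that each inclusion $V_i \hookrightarrow Y$ is null-homotopic. In particular, for every $y \in V_i$ the induced homomorphism $\pi_1(V_i, y) \to \pi_1(Y, y)$ is trivial.

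Next, I would set $U_i := f^{-1}(V_i)$, obtaining an open cover of~$X$ of cardinality $d+1$ and hence of multiplicity at most~$d+1$. To verify amenability of each~$U_i$, I would fix $x \in U_i$ and consider the image $H \leq \pi_1(X, x)$ of $\pi_1(U_i, x) \to \pi_1(X, x)$. By naturality of~$\pi_1$, the subgroup $\pi_1(f)(H) \leq \pi_1(Y, f(x))$ is contained in the image of $\pi_1(V_i, f(x)) \to \pi_1(Y, f(x))$, which is trivial by construction. Therefore $H \subset \ker(\pi_1(f))$, and since the kernel is amenable by hypothesis and subgroups of amenable groups are amenable, $H$ is amenable.

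This yields an amenable open cover of~$X$ of multiplicity at most $\dim(Y)+1$, hence $\amcat(X) \leq \dim(Y)+1$. The argument is a direct combination of the LS-category bound with the naturality of~$\pi_1$, so no substantial obstacle is expected; the only small point worth flagging is that any finite open cover of cardinality~$k$ automatically has multiplicity at most~$k$, so the cardinality bound supplied by LS-category immediately delivers the required multiplicity bound.
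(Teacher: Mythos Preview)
Your proof is correct and follows the same strategy as the paper's: pull back along~$f$ an open cover of~$Y$ whose members are null-homotopic in~$Y$, so that each preimage has $\pi_1$-image contained in the amenable group~$\ker(\pi_1(f))$. The only cosmetic difference is the source of this cover: the paper replaces~$Y$ by a simplicial complex of the same dimension and takes the open stars of vertices (contractible, multiplicity~$\dim(Y)+1$), whereas you cite the Lusternik--Schnirelmann bound~$\cat(Y)\leq\dim(Y)$ to obtain a categorical cover of cardinality~$\dim(Y)+1$.
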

\begin{proof}
  The CW-complex~$Y$ is homotopy equivalent to a simplicial complex of the same dimension~\cite[Theorem~2C.5 on p.~182]{hatcher}. Thus we may assume that $Y$ is already a simplicial complex. 
  The space $Y$ is covered by the open stars of its vertices. The multiplicity of this cover is $\dim(Y)+1$. Each open star is contractible. In particular, 
  we have $\amcat(Y) \leq \dim(Y) + 1$. By taking the pullback along $f$ of every open amenable cover of $Y$, the 
  amenability of~$\ker \pi_1(f)$ gives~$\amcat(X) \leq
  \amcat(Y)$~\cite[Remark~2.9]{CLM}.
\end{proof}

This result allows us to compute the amenable category of a space in terms of the cohomological
dimension of certain quotients:

\begin{lem}\label{lemma:cover:cohom:dimension}
Let $n \geq 4$. Let $X$ be a connected CW-complex with fundamental group $\Gamma := \pi_1(X)$.
Suppose that $\Gamma$ contains an amenable normal subgroup~$A$
such that $\Lambda := \Gamma \slash A$ has cohomological dimension~$\cd \Lambda < n$.
Then, $\amcat(X) \leq n$.
\end{lem}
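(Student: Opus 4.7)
The plan is to reduce to Lemma~\ref{lemma:pullback:cover} by constructing a continuous map $f \colon X \to Y$ into a suitable classifying space $Y$ of dimension at most $n-1$ whose $\pi_1$-kernel is amenable. Then, since $\dim(Y)+1\leq n$, the lemma will give $\amcat(X)\leq n$ directly.

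First, I build the target $Y$. The quotient $\Lambda := \Gamma/A$ has cohomological dimension~$\cd\Lambda < n$, i.e.\ $\cd\Lambda \leq n-1$. I would then invoke the Eilenberg--Ganea theorem, which states that if $\cd \Lambda = d \geq 3$, then $\Lambda$ admits an Eilenberg--MacLane space $K(\Lambda,1)$ realised as a CW-complex of dimension $d$; if $\cd\Lambda = 1$, then $\Lambda$ is free and $K(\Lambda,1)$ can be taken to be a graph; if $\cd\Lambda = 2$, then $\Lambda$ admits a $K(\Lambda,1)$ of dimension at most~$3$. In all three cases, the hypothesis $n\geq 4$ ensures that there is a CW-model $Y = K(\Lambda,1)$ with $\dim(Y) \leq n-1$. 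This is the step that uses the assumption $n\geq 4$ and is the only place where a subtlety (the potential gap between cohomological and geometric dimension when $\cd\Lambda = 2$) enters.

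Second, I construct the map. The composition $\Gamma \twoheadrightarrow \Gamma/A = \Lambda$ is a group homomorphism, and because $Y$ is aspherical, it can be realised, up to homotopy, by a continuous map $f\colon X\to Y$ with $\pi_1(f)$ equal to this quotient; this is the standard classifying-space argument (map the $1$-skeleton of $X$ by sending loops to representatives of their images in $\Lambda$, then extend cell by cell using that $\pi_k(Y)=0$ for $k\geq 2$). By construction, $\ker \pi_1(f) = A$, which is amenable by hypothesis.

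Finally, applying Lemma~\ref{lemma:pullback:cover} to this~$f$ yields
\[
  \amcat(X) \;\leq\; \dim(Y) + 1 \;\leq\; (n-1)+1 \;=\; n,
\]
as desired. The main (and essentially only) obstacle is the first step; the rest is a formality about classifying spaces, and the pullback estimate is delegated to Lemma~\ref{lemma:pullback:cover}.
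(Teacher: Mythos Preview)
Your proof is correct and follows essentially the same approach as the paper: build a model~$Y$ of~$B\Lambda$ with $\dim Y \leq n-1$ (the paper cites Brown, Chapter~VIII.7, while you spell out the Eilenberg--Ganea case analysis and why $n\geq 4$ is needed), then apply Lemma~\ref{lemma:pullback:cover} to the classifying map~$X \to Y$ realising the quotient $\Gamma \twoheadrightarrow \Lambda$.
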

\begin{proof}
  Because $n \geq 4$ and $\cd \Lambda < n$, there exists a
  model~$Y$ of the classifying space~$B\Lambda$
  with~$\dim Y < n$~\cite[Chapter~VIII.7]{brown}. 
  We now apply Lemma~\ref{lemma:pullback:cover} to
  the composition~$B\pi \circ c_X \colon X \longrightarrow Y$,
  where $\pi \colon \Gamma \twoheadrightarrow \Lambda$ is
  the canonical projection and $c_X \colon X \to B\Gamma$
  is the classifying map.
\end{proof}

In particular, Theorem~\ref{thm:main} applies to oriented
closed connected aspherical $n$-manifolds with~$n \geq 4$,
whose fundamental group~$\Gamma$ contains an amenable
normal subgroup~$A$ with~$\cd (\Gamma/A) < n$.

The approach of constructing amenable covers as pullbacks
dates back to Gromov's proof of Yano's theorem~\cite{vbc}*{p.~41}
and was recently generalized to the case of $F$-structures by Babenko 
and Sabourau~\cite{bsfibre}*{Corollary~2.8}.

Similarly, we can handle $S^1$-foliations:

\begin{prop}\label{prop:foliation}
Let $M$ be an oriented closed connected smooth manifold that admits a regular smooth circle foliation with finite holonomy groups. 
Then, $\amcat(M) \leq \dim(M)$.
\end{prop}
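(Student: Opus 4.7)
The plan is to exhibit a finite open amenable cover of $M$ of multiplicity at most $n := \dim(M)$; the conclusion $\amcat(M) \leq n$ then follows directly from the definition. Let $\mathcal{F}$ denote the given regular circle foliation on $M$. Since every leaf is compact and all holonomy groups are finite, a classical structure theorem of Epstein guarantees that the leaf space $B := M/\mathcal{F}$ is a compact Hausdorff orbifold of dimension~$n-1$ with finite local groups; write $q \colon M \to |B|$ for the quotient map onto the underlying topological space. Locally around each leaf $L$ with holonomy group~$G_L$, the foliation admits a saturated chart of the form~$(S^1 \times \tilde V)/G_L$, where $\tilde V \subseteq \R^{n-1}$ is an open ball on which $G_L$ acts freely in combination with a rotation action on the $S^1$-factor; correspondingly, the image $V := \tilde V/G_L \subseteq |B|$ is an orbifold chart around the point represented by~$L$.

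To construct the cover, I would first choose an open cover~$(V_\alpha)_\alpha$ of~$|B|$ by such orbifold charts. Because $|B|$ is a compact Hausdorff space of covering dimension~$n-1$, standard dimension theory furnishes a refinement $(V_i)_{i \in I}$ of multiplicity at most $\dim|B| + 1 = n$. Set $U_i := q^{-1}(V_i) \subseteq M$; then $(U_i)_{i \in I}$ is an open cover of~$M$ whose multiplicity coincides with that of $(V_i)_{i \in I}$, hence is at most~$n$. Each $U_i$ is contained in a foliated chart $q^{-1}(V_{\alpha(i)}) \cong (S^1 \times \tilde V_{\alpha(i)})/G_{\alpha(i)}$, whose fundamental group fits into a short exact sequence with kernel~$\Z$ and finite quotient~$G_{\alpha(i)}$, and is therefore virtually cyclic, in particular amenable. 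Since amenability is inherited by subgroups and quotients, the image of~$\pi_1(U_i)$ in~$\pi_1(M)$ is amenable, so $U_i$ is amenable in the sense of the paper. Finally, refining each $U_i$ to its connected components preserves the multiplicity (each point lies in precisely one component of each~$U_i$ containing it) and yields the desired path-connected open amenable cover, whence $\amcat(M) \leq \dim(M)$.

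The principal obstacle is the very first step: identifying the leaf space as a Hausdorff orbifold of dimension~$n-1$ with finite local groups. This is precisely where the finite-holonomy hypothesis is indispensable, since without it the leaf space of a circle foliation need not even be Hausdorff, and the local product description~$(S^1 \times \tilde V)/G_L$ breaks down. Once Epstein's theorem is available, the remaining cover construction and the amenability verification rely only on the standard dimension-theoretic fact that open covers of a compact Hausdorff space of covering dimension~$d$ admit refinements of multiplicity at most~$d+1$, which is routine.
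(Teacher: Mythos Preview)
Your proposal is correct and follows essentially the same route as the paper: pass to the $(n-1)$-dimensional orbifold leaf space via local Reeb stability, verify that saturated chart neighbourhoods have amenable (virtually cyclic) fundamental group, and pull back a cover of multiplicity at most~$n$ from the leaf space to~$M$. The only cosmetic difference is that the paper obtains the low-multiplicity cover of the leaf space from an explicit triangulation and its open stars, whereas you appeal to covering-dimension theory directly.
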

\begin{proof}
Let $\mathcal{F}$ be a regular smooth circle foliation of~$M$ with finite holonomy groups, and let $X := M \slash \mathcal{F}$
be the leaf space. 
Let $\pi \colon M \to X$ be the projection map. 
Then the leaf space~$X$ is an orbifold of dimension~$\dim(X) = \dim(M)
-1$ and can be
triangulated~\citelist{\cite{moerdijk-book}*{Theorem~2.15 on
    p.~40}\cite{moerdijk+pronk}*{Proposition~1.2.1}}.

We explain how the open stars covering of a subdivision
of~$X$ leads to an amenable cover of~$M$ with small multiplicity: 

Let $n := \dim(M)$, let $L \subset M$ be a leaf of $\mathcal{F}$ and let $H$ be its holonomy group. 
Given $x \in L$, there exists a 
sufficiently small open disk~$D^{n-1}$ that is a transversal section of~$\mathcal{F}$ at~$x$~\cite{moerdijk-book}*{Section~2.3 and Remark on p.~31}
and such that each element of~$H$ can be represented by a holonomy diffeormorphism of~$D^{n-1}$.
In this situation we can define the following: If $\overline{L} \to L$ is a finite covering of~$L$ corresponding to the finite group~$H$,
we denote by~$\overline{L} \times_{H} D^{n-1}$ the quotient space of~$\overline{L} \times D^{n-1}$
under the identification~$(l h, d) \sim (l, h d)$ for all $l \in \overline{L}, h \in H, d \in D^{n-1}$.
Since each leaf is a circle, by construction, $\overline{L} \times_{H} D^{n-1}$ has the structure 
of a disk bundle over~$S^1$~\cite{moerdijk-book}*{p.~17}. 
In particular, $\overline{L} \times_{H} D^{n-1}$ has an amenable fundamental group.

By the local Reeb stability theorem~\cite{moerdijk-book}*{Theorem~2.9}, 
every leaf~$L$ of~$\mathcal{F}$ admits a saturated open neighbourhood~$V_L \subset M$
that is diffeomorphic to~$\overline{L} \times_{H} D^{n-1}$ as above;
a set $V$ is \emph{saturated} if for every~$y \in V$, the leaf passing through~$y$
is entirely contained in~$V$. 

We now consider for each leaf~$L$ of~$\mathcal{F}$ the projection~$\pi(V_L) \subset M \slash \mathcal{F}$.
By construction, the open sets~$\pi(V_L) \cong D^{n-1} \slash H$ provide an atlas for the orbifold~$M \slash \mathcal{F}$.
Hence, passing to an iterated subdivision~$T$ of the triangulation of~$X = M \slash \mathcal{F}$,
we can assume that each open star at a vertex of~$T$ is entirely contained in a
set of the form~$\pi(V_L)$ with $L$ a leaf of~$\mathcal{F}$.
Let $U$ be the open cover of~$X$ corresponding to the open stars at the vertices of~$T$
and let $U'$ be the pullback of~$U$ along~$\pi$. By construction, we have
$\mult (U') = \mult (U) =  \dim(M \slash \mathcal{F}) + 1 = \dim(M)$.
Moreover, the open cover~$U'$ is amenable because each member of~$U$
is entirely contained in some amenable set~$V_L$.
This shows that $\amcat(M) \leq \dim(M)$.
\end{proof}

As a corollary we deduce the case of smooth $S^1$-action without fixed points:

\begin{cor}\label{cor:S1:action}
Let $M$ be an oriented closed connected smooth manifold that admits a smooth $S^1$-action without fixed points. 
Then, $\amcat(M) \leq \dim(M)$.
\end{cor}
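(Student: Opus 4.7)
The plan is to reduce Corollary~\ref{cor:S1:action} directly to Proposition~\ref{prop:foliation} by exhibiting the orbit decomposition of the $S^1$-action as a regular smooth circle foliation with finite holonomy groups.

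First I would observe that because the smooth $S^1$-action on $M$ has no fixed points, every orbit is a smooth embedded circle (the orbit of~$x$ is diffeomorphic to~$S^1/\Stab(x)$, and absence of fixed points means $\Stab(x) \neq S^1$, so $\Stab(x)$ is a proper closed subgroup of $S^1$, hence a finite cyclic group, and the orbit is a circle). The partition of $M$ into $S^1$-orbits is a smooth foliation $\mathcal{F}$ of $M$ by circles; regularity amounts to the fact that all leaves have the same dimension, which holds here since all orbits are $1$-dimensional.

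Next I would identify the holonomy of each leaf $L = S^1 \cdot x$. Choosing a slice $D^{n-1}$ at~$x$ transverse to~$L$ (which exists by the slice theorem for compact Lie group actions), the finite isotropy group $\Stab(x)$ acts on $D^{n-1}$ and the holonomy representation of~$L$ factors through this isotropy group. Since $\Stab(x)$ is finite, the holonomy group of~$L$ is finite. Thus $\mathcal{F}$ is a regular smooth circle foliation of~$M$ with finite holonomy groups.

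Applying Proposition~\ref{prop:foliation} to $(M,\mathcal{F})$ then yields $\amcat(M) \leq \dim(M)$, which is the claim. The only step that requires any care is the identification of the holonomy with the finite isotropy group, but this is a routine consequence of the slice theorem for smooth actions of compact Lie groups, so I do not expect any serious obstacle; the corollary is essentially a direct specialisation of the foliation statement.
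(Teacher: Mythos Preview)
Your proposal is correct and follows essentially the same approach as the paper: both reduce the corollary to Proposition~\ref{prop:foliation} by observing that a fixed-point-free smooth $S^1$-action yields a regular smooth circle foliation with finite holonomy groups. The paper simply cites this fact from the literature, whereas you spell out the details via the slice theorem, but the argument is the same.
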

\begin{proof}
It is sufficient to notice that every smooth $S^1$-action without fixed points gives rise to a regular smooth circle foliation with finite holonomy groups~\cite[p.~16]{moerdijk-book}.
Therefore, the result is a direct consequence of Proposition~\ref{prop:foliation}.
\end{proof}

It should be noted that every smooth non-trivial
$S^1$-action on a closed aspherical manifold has no
fixed points~\cite[Corollary~1.43]{lueckl2}.

\bibliographystyle{abbrv}
\bibliography{bib}

\end{document}